\numberwithin{equation}{section}
\newcommand\R{{{\mathbf R}}}
\newcommand\C{{{\mathbf C}}}
\newcommand\Z{{{\mathbf Z}}}
\newcommand\F{{{\mathbf F}}}
\newcommand\bigO{{\mathbf{O}}}
\newcommand\eps{\varepsilon}
\theoremstyle{plain}
  \newtheorem{theorem}[subsection]{Theorem}
  \newtheorem{proposition}[subsection]{Proposition}
  \newtheorem{lemma}[subsection]{Lemma}
\theoremstyle{remark}
  \newtheorem{remark}[subsection]{Remark}
\theoremstyle{definition}
\begin{document}

\title{The sum-product phenomenon in arbitrary rings}

\author{Terence Tao}
\address{Department of Mathematics, UCLA, Los Angeles CA 90095-1555}
\email{tao@@math.ucla.edu}

\begin{abstract}
The \emph{sum-product phenomenon} predicts that a finite set $A$ in a ring $R$ should have either a large sumset $A+A$ or large product set $A \cdot A$ unless it is in some sense ``close'' to a finite subring of $R$.  This phenomenon has been analysed intensively for various specific rings, notably the reals $\R$ and cyclic groups $\Z/q\Z$.  In this paper we consider the problem in arbitrary rings $R$, which need not be commutative or contain a multiplicative identity.  We obtain rigorous formulations of the sum-product phenomenon in such rings in the case when $A$ encounters few zero-divisors of $R$.  As applications we recover (and generalise) several sum-product theorems already in the literature.
\end{abstract}

\maketitle

\section{Introduction} 

\subsection{The sum-product phenomenon}

Let $R = (R,0,+,-,\cdot)$ be a ring (which need not be commutative, and need not contain a multiplicative identity $1$).  Given any sets $A, B \subset R$, we define the \emph{sum set}
$$ A + B := \{ a+b: a \in A, b \in B \}$$
the \emph{difference set}
$$ A - B := \{ a-b: a \in A, b \in B \}$$
and the \emph{product set}
$$ A \cdot B := \{ ab: a \in A, b \in B \}.$$
We also define the iterated sum sets and iterated product sets
$$ nA := A + \ldots + A; \quad A^n := A \cdot \ldots \cdot A$$
for $n \geq 1$, with the convention $0A:=\{0\}$.  We also write $r \cdot A := \{r\} \cdot A$, $A \cdot r := A \cdot \{r\}$, and $r+A=A+r := A + \{r\}$ for any ring element $r \in R$.  If $A$ is a finite set, we use $|A|$ to denote the cardinality of $A$.

We let $R^*$ denote the collection of \emph{non-zero-divisors} of $R$, i.e. the elements $r \in R$ such that $ra, ar \neq 0$ whenever $a \neq 0$.  Observe that $R^*$ is closed under multiplication, and that $|r \cdot A| = |A \cdot r| = |A|$ for any $r \in R^*$ and finite $A$.

If $A$ is a finite subring of $R$, then clearly $|A+A| = |A|$ and $|A \cdot A| \leq |A|$, with equality holding in the latter case if $A$ contains at least one non-zero-divisor.  The same holds if $A$ is a dilate $r \cdot R$ or $R \cdot r$ of a commutative ring for some non-zero-divisor $r$.  The remarkable \emph{sum-product phenomenon} asserts a robust converse to this simple observation in many cases: very roughly speaking, it asserts that if $A$ is a finite non-empty subset of a suitable ring $R$ with $A+A$ and $A \cdot A$ both having size comparable to $A$, then $A$ should be ``very close'' to a ring (or a dilate of a ring), for instance $A$ might be contained in a finite ring (or dilate of a finite ring) of size comparable to $A$.

\subsection{Prior results}

The first rigorous demonstration of the sum-product phenomenon was by Erd\H{o}s and Szemer\'edi \cite{erdos-sumproduct} in the ring of integers $\Z$, in which of course there are no non-trivial finite subrings.  In this context they showed that for any finite non-empty set $A \subset \Z$, one had $|A+A| + |A \cdot A| \geq c |A|^{1+\eps} |A|$ for some $c, \eps > 0$, and conjectured that one can in fact take $\eps$ arbitrarily close to $1$.  There is a substantial further literature on this problem \cite{ford}, \cite{nathanson-product}, \cite{elekes}, \cite{chen}, \cite{nathanson}, \cite{elru}, \cite{enr}, \cite{chang-comp}, \cite{chang-sumproduct}, \cite{chang-gafa}, \cite{bourgain-chang-comp}, \cite{bourgain-chang}, \cite{chang-distinct}, \cite{chang-distinct2}, \cite{tao-vu}, \cite{chang-survey}, \cite{konlaba}, \cite{chang-soly}, \cite{soly}; for instance, Solymosi \cite{soly} recently showed that $\eps$ can be taken arbitrarily close to $1/3$ (and that the integers can be replaced with the reals).  See \cite{chang-survey} for a brief survey of some other recent results in this direction.  For a continuous version of the sum-product phenomenon in $\R$ (related to the Erd\H{o}s-Volkmann ring conjecture \cite{erdos-ring}  first solved in \cite{edgar}), see \cite{borg-ring}; this result has applications to geometric measure theory \cite{katz-falconer} and the theory of invariant measures \cite{bfl} and spectral gaps \cite{su2}.

For the complex numbers $\C$, Solymosi also showed in \cite{soly-complex} that one can take $\eps$ arbitrary close to $1/4$.  An earlier result of Chang \cite{chang} takes $\eps$ arbitrarily close to $1/54$, but allows $R$ to be either $\C$ or the quarternion algebra, and also gives similar results (with non-explicit values of $\eps$) for finite-dimensional division algebras over $\R$; see also another paper of Chang \cite{chang-semisimple} which achieves a similar result for semi-simple commutative Banach algebras over $\R$ or $\C$, and in particular in the infinite product spaces $\R^\Z$ and $\C^\Z$.  On the other hand, it was observed in \cite{chang-semisimple} that $\eps$ cannot exceed $1 - \frac{\log 2}{\log 3}$ in those spaces.

When $R$ is a matrix ring over $\R$ or $\C$, it was shown in \cite{chang-matrix} that $|A+A| + |A \cdot A| \geq f(|A|) |A|$ whenever $(A-A) \backslash R^* = \{0\}$, for some function $f(x)$ depending on $R$ which goes to infinity as $x \to \infty$; in the case when $A$ consisted entirely of real symmetric matrices, one could take $f(x) = c x^\eps$ for some $c,\eps > 0$ depending on $R$ (as in previous sum-product results).  In the slightly different context of multiplicative matrix groups rather than rings, it was shown in \cite{chang-sl2} that when $A \subset \mathbb{SL}_3(\Z)$ was a finite non-empty set, then $|A \cdot A \cdot A| \geq c |A|^{1+\eps}$ unless a large subset of $A$ was contained in a coset of a nilpotent subgroup; a similar result was also established for $R = \mathbb{SL}_2({\C})$ in the same paper, in which the nilpotent subgroup was now abelian.

Now we turn to the case of finite characteristic.
In the case when $R$ is a finite field of prime order, which has no non-trivial subrings other than the full ring $R$, it was shown in \cite{bkt}, \cite{konyagin}, \cite{konyagin2} that for every $\delta > 0$ there exists $\eps > 0$ and $c > 0$ such that $|A+A| + |A \cdot A| \geq c |A|^{1+\eps}$ for all finite non-empty $A \subset R$ with $|A| \leq |R|^{1-\delta}$.  (Some upper bound on $|A|$ is of course necessary since $|A+A|$ and $|A \cdot A|$ clearly cannot exceed $|R|$.)  This estimate has numerous applications to exponential sums, number theory, combinatorics, and computer science; see \cite{borg-diffie-comp}, \cite{borg-diffie}, \cite{borg-exp}, \cite{borg-mordell}, \cite{bourgain-survey}, \cite{borg-more}, \cite{borg-arith}, \cite{bg}, \cite{bgs}, \cite{borg-extract}.  In \cite{garaev}, it was shown that one could take $\eps$ arbitrarily close to $1/14$ if $\delta > 6/13$; a slight variant of this argument in \cite{katz} showed that one can take $\eps$ arbitrarily close to $1/13$ if $\delta > 1/2$, and in \cite{boga} it was shown that one can take $\eps$ arbitrarily close to $1/12$ if $\delta > 11/23$ and $A+A$ is replaced by $A-A$.  In \cite{garaev2} it was shown that one could take $\eps=\delta/2$ if $\delta < 1/3$.  Variants of these results for elliptic curves, or for exponentiated versions of the sum-product problem in finite fields, were obtained in \cite{shpar1}, \cite{shpar2}, \cite{shpar3}; generalisations to other polynomials than the product operation were considered in \cite{vu}.  Also, in \cite{wood} a general embedding theorem was established which allowed one to transfer sum-product type theorems in finite fields to commutative integral domains of characteristic zero.

When $R$ is a more general finite field, the situation is more complicated due to the presence of non-trivial rings of intermediate size, namely the subfields of $R$ and their dilates.  In \cite[Theorem 4.3]{bkt} it was shown that if $A \subset R$ is non-empty with $|A+A| + |A \cdot A| \leq K |A|$ and $|A| \geq |R|^\delta$ then there must be a subfield $F$ of $R$ of size $|F| \leq K^{O_\delta(1)} |A|$ and an invertible element $a$ of $F$ such that $A \subset a \cdot F + \bigO( K^{O_\delta(1)} )$, where $\bigO( N )$ denotes a non-empty set in $R$ of cardinality $O(N)$, and the subscripts in the $O()$ notation indicate that the implied constants can depend on $\delta$.  In the converse direction, observe that if $A \subset a \cdot F + \bigO( K^{O(1)} )$ then $|A+A| + |A \cdot A| = O( K^{O(1)} |F| )$, so this result is sharp up to polynomial factors in $K$ and dependence on $\delta$.  In \cite[Theorem 2.55]{tao-vu} the hypothesis $|A| \geq |R|^\delta$ was removed, with the result also extending to infinite fields $R$ (thus recovering in particular some of the sum-product theory in $\Z$, $\R$, and $\C$).  In \cite{katz2} the following explicit variant was established: if $|A+A| + |A \cdot A| \leq c |A|^{1+\frac{1}{48}}$ for some sufficiently small absolute constant $c > 0$, then there exists a set $A' \subset A$ with $|A'| \geq |A|^{1-\frac{1}{48}}$ and a subfield $F$ of $R$ of size $|F| \leq |A'|^2$ such that $A' \subset a \cdot F + b$ for some $a,b \in R$.  Other results of sum-product type in finite fields (or bounded dimensional vector spaces over such fields) in the case when $A$ is large (e.g. $|A| > |F|^{1/2}$) were obtained in \cite{hart}, \cite{hart2}, \cite{covert}.

The case of more general cyclic rings $R = \Z/q\Z$ than the fields of prime order is considered in \cite{bourgain-chang-zq}, \cite{bourgain-survey}, \cite{bgs}, \cite{borg-sump-exp}; in particular, it was shown in \cite{borg-sump-exp} that if $A \subset R$ and $1 \leq |A| \leq |R|^{1-\delta_1}$, and $|\pi_{q_1}(A)| \geq q_1^{\delta_2}$ for all $q_1 | q$ with $q_1 \geq q^{\eps}$ for some sufficiently small $\eps = \eps(\delta_1,\delta_2) > 0$, where $\pi_{q_1}: R \to \Z/q_1 \Z$ is the projection homomorphism, then $|A+A| + |A \cdot A| \geq q^{\delta_3} |A|$ for some $\delta_3 = \delta_3(\delta_1,\delta_2) > 0$. Similar results for rings such as $R = \Z/p\Z \times \Z/p\Z$ also appear in \cite{borg-diffie-comp}, \cite{borg-diffie}, \cite{borg-mordell}.  Applications of these estimates to exponential sums appear in \cite{borg-chang-exp}, \cite{borg-finite-exp}, \cite{borg-sump-exp}, \cite{borg-exponential}.

Finally, we mention the result of Helfgott \cite{helf} that shows that if $A \subset \mathbb{SL}_2({\mathbf F}_p)$ with $|A| \leq p^{3-\delta}$, and $A$ is not contained in any proper subgroup, then $|A \cdot A \cdot A| \geq c |A|^{1+\eps}$ for some $c,\eps > 0$ depending only on $\delta > 0$.  This result (and variants for other groups, including continuous groups such as $SU(2)$) has applications to expander graphs, sieving, and diophantine approximation: see \cite{bg}, \cite{bgs}.

\subsection{New results}

In this paper we study the sum-product phenomenon in arbitrary rings $R$, which need not be commutative or to contain a multiplicative identity $1$.  In doing so one must make some sort of assumption to avoid too many zero divisors; in the most extreme case, when the product operation is identically zero, then $A \cdot A$ is always just $\{0\}$.  There does not appear to be any canonical way to get around this issue; for us, it will be convenient to make two (related) non-degeneracy assumptions.  The first is that $A \cdot A$ is not much smaller than $A$; the other is that $A-A$ does not have an extremely large number of zero-divisors.  These assumptions seem to be reasonable in situations in which the set of zero-divisors in $R$ is very sparse; it would be of interest to weaken our hypotheses to handle rings with many zero-divisors\footnote{In particular, our results here do not fully recover the results in $\Z/q\Z$ mentioned above in the case when $q$ has many small divisors.  Indeed, it seems to the author that this case requires a genuinely multiscale analysis and so may in fact be beyond the purely elementary approach used here.}, but we will not do so here.

Our results are of the following general form: if $A$ is a finite non-empty subset of a ring $R$ for which certain additive and multiplicative combinations of $A$ are small, and $A$ is non-degenerate in the sense described above, then $A$ can be efficiently contained in a ring, or a slight modification of a ring.  

The simplest case is if we assume that $A + A \cdot A$ is small (comparable to $A$ in size).  Examples of such sets include finite subrings $S \subset R$ of $R$, as well as dense subsets of such rings.  Our first result, roughly speaking, asserts (in the non-degenerate case) that these are in fact the only such sets with this property.

\begin{theorem}[Inhomogeneous sum-product theorem]\label{sum1-thm} Let $R$ be a ring, and let $A \subset R$ be finite and non-empty.  Suppose that $|A + A \cdot A| \leq K |A|$ and $|A \cdot A| \geq |A|/K$ for some $K \geq 1$.  Then at least one of the following holds for some absolute constant $C > 0$:
\begin{itemize}
\item[(i)] ($A-A$ has many zero divisors) We have $|(A-A) \backslash R^*| \geq C^{-1} K^{-C} |A|$; or
\item[(ii)] (Subring structure) There exists a finite subring $S$ of $R$ such that $A \subset S$ and $|S| \leq C K^C |A|$. 
\end{itemize}
\end{theorem}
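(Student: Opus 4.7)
The plan is to exploit the single hypothesis $|A + A \cdot A| \leq K|A|$ via Pl\"unnecke--Ruzsa to obtain polynomial-in-$K$ control on all short Minkowski combinations of $A$ and $A \cdot A$; then, when case (i) fails, to extract a non-zero-divisor $r \in R^*$ from $A - A$ and use it to bootstrap to additive control on $A$ itself; and finally to assemble the resulting additive and multiplicative bounds into an explicit finite subring containing $A$.

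\emph{Step 1.} Applying Pl\"unnecke--Ruzsa in the form ``$|A + B| \leq K|A|$ implies $|nB - mB| \leq K^{n+m}|A|$'' with $B = A \cdot A$ gives $|n(A \cdot A) - m(A \cdot A)| \leq K^{n+m}|A|$ and, by chaining through $A$, bounds of the form $|A + n(A \cdot A) - m(A \cdot A)| \leq K^{n+m+1}|A|$ for all $n, m \geq 0$. Combined with $|A \cdot A| \geq |A|/K$, this controls every short additive combination of $A$ and $A \cdot A$ by $K^{O(1)}|A|$. Ruzsa's covering lemma applied to $|A \cdot A + A| \leq K|A|$ further yields a set $X \subseteq A \cdot A$ with $|X| \leq K$ and $A \cdot A \subseteq X + (A - A)$.

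\emph{Step 2.} Assume case (i) fails: $|(A - A) \setminus R^*| < C^{-1} K^{-C}|A|$. Fix any $a_0 \in A$; almost every element of $A - a_0 \subseteq A - A$ lies in $R^*$, so (for $C$ large enough) we may pick $a_1 \in A$ with $r := a_1 - a_0 \in R^*$. The identity $rA = a_1 A - a_0 A \subseteq (A \cdot A) - (A \cdot A)$ combined with Step 1 gives $|A + rA| \leq K^{O(1)}|A|$. Since $r \in R^*$ forces $|rA| = |A|$, Ruzsa's triangle inequality gives $|A + A| \leq |A + rA|^2/|rA| \leq K^{O(1)}|A|$, and analogously $|A - A| \leq K^{O(1)}|A|$. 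A further Pl\"unnecke--Ruzsa round then yields $|nA - mA| \leq K^{O(n+m)}|A|$ for all bounded $n, m$.

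\emph{Step 3 (and main obstacle).} The candidate subring is the additive closure of short iterated products, e.g., $S := \sum_{k \leq k_0} (n_0 A - n_0 A)^k$ for suitable bounded constants $n_0, k_0 = O(1)$. Using the covering $A \cdot A \subseteq X + A - A$ from Step 1 inductively, each $A^k$ lies in the additive closure of $A - A$ up to controlled error, so the sumset bounds from Step 2 give $|S| \leq C K^C|A|$. The central difficulty is verifying that this tower stabilizes for some bounded $k_0$---naive Pl\"unnecke bounds grow like $K^{O(k)}$ in $k$, so to close $S$ under multiplication in polynomially many iterations one needs a ring-analog of the Freiman--Ruzsa subgroup argument. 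This is where the non-zero-divisor $r$ from Step 2 plays its essential role: multiplication by $r$ is cardinality-preserving, propagating additive bounds through successive multiplications and forcing $A^{k_0 + 1}$ to be additively absorbed into the closure of $A^{\leq k_0}$, producing a genuine finite subring.
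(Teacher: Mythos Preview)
Your Steps 1 and 2 are sound and match the paper's preliminary reductions: Pl\"unnecke--Ruzsa gives $|A\cdot A - A\cdot A|\le K^{O(1)}|A|$, and failure of (i) supplies a non-zero-divisor $r\in (A-A)\cap R^*$, whence $|A-A|\le K^{O(1)}|A|$.

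Step 3, however, is a plan rather than a proof, and the obstacle you name is real and not overcome by what you write. The suggestion that ``multiplication by $r$ is cardinality-preserving, propagating additive bounds through successive multiplications'' does not deliver stabilisation: multiplication by $r\in A-A$ sends $A^k$ into $A^{k+1}-A^{k+1}$, i.e.\ it raises degree rather than lowering it, so it cannot force $A^{k_0+1}$ back into the additive span of $A^{\le k_0}$. Concretely, trying to iterate the covering $A\cdot A\subseteq (A-A)+X$ gives $A^3\subseteq (2A-2A)+(X-X)+A\cdot X$, and the term $A\cdot X$ has size up to $K|A|$ rather than $K^{O(1)}$; the error sets explode and you never get a uniform-in-$k$ bound on $|A^k|$ this way. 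A ``Freiman--Ruzsa subgroup argument'' is an additive device and does not by itself control iterated \emph{products}.

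The paper bypasses the whole stabilisation issue by a different construction: it defines
\[
S:=S_1=\{\,x\in R:\ |x\cdot A+A|\le K^{C_0}|A|\,\}
\]
for a large absolute constant $C_0$, and proves (Proposition~3.1) a \emph{self-improving property}: membership in $S_1$ actually forces $|x\cdot A+A|\le K^{O(1)}|A|$ with an $O(1)$ independent of $C_0$. This absorption of losses is exactly what makes $S_1$ closed under both addition and multiplication without the bounds blowing up in the number of operations. The size bound $|S_1|\le K^{O(1)}|A|$ comes from an incidence count (many solutions to $xd=d'$ with $d\in(A-A)\cap R^*$), and $A\subseteq S_1$ is immediate from the hypothesis $|A+A\cdot A|\le K|A|$. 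Your proposal is missing this self-improvement mechanism, which is the heart of the argument.
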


Note in the converse direction that if $A \subset S$ for some finite subring $S$ and $|S| \leq K |A|$, then $|A + A \cdot A| \leq |S| \leq K |A|$, and so the conclusion here is reasonably sharp up to polynomial losses.  Also, the hypothesis $|A \cdot A| \geq |A|/K$ is automatic if $A$ contains at least one non-zero-divisor.

The hypothesis that $A + A \cdot A$ is small is \emph{inhomogeneous} in the sense that it is not preserved by dilations $A \mapsto r \cdot A$ (of course, the property of being a subring is also not homogeneous).  Let us now consider the homogeneous case when $A \cdot A - A \cdot A$ is small; examples of such sets $A$ include dilates $a \cdot S$ of finite rings $S$ for an invertible element $a$ (assuming $R$ has an identity), as long as $a$ normalises $S$ in the sense that $a \cdot S = S \cdot a$.  We can obtain a converse to this claim, similarly to Theorem \ref{sum2-thm}, under the additional assumption that $A$ contains an invertible element:

\begin{theorem}[Homogeneous sum-product theorem with invertible element]\label{sum2-thm} Let $R$ be a ring with identity, and let $A \subset R$ be finite and non-empty.  Suppose that $|A \cdot A - A \cdot A| \leq K |A|$ and $|A \cdot A| \geq |A|/K$ for some $K \geq 1$.  Suppose also that $A$ contains an invertible element $a$.  Then at least one of the following holds for some absolute constant $C$:
\begin{itemize}
\item[(i)] ($A-A$ has many zero divisors) We have $|(A-A) \backslash R^*| \geq C^{-1} K^{-C} |A|$; or
\item[(ii)] (Subring structure) There exists a finite subring $S$ of $R$ such that $A \subset a\cdot S=S\cdot a$ and $|S| \leq C K^C |A|$.
\end{itemize}
\end{theorem}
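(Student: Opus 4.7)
The plan is to reduce Theorem~\ref{sum2-thm} to the already stated inhomogeneous Theorem~\ref{sum1-thm} by using the invertible element $a$ to normalize. Set $B := a^{-1} \cdot A$, so that $|B|=|A|$ and, since $a \in A$, $1 \in B$. Then $B \subset B \cdot B$, so $|B \cdot B| \ge |B|$, verifying the multiplicative non-degeneracy hypothesis of Theorem~\ref{sum1-thm} automatically. The main technical task is to deduce the inhomogeneous condition $|B + B \cdot B| \le K^{O(1)}|B|$ from the homogeneous hypothesis $|A \cdot A - A \cdot A| \le K|A|$.

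A short factorization gives $B + B \cdot B = a^{-1}(A + A \cdot a^{-1} \cdot A)$, so it suffices to bound $|A + A a^{-1} A|$. First, $A \cdot A$ has additive doubling $O(K^2)$ (since $|A\cdot A - A\cdot A| \le K|A| \le K^2 |A \cdot A|$), so by Pl\"unnecke--Ruzsa all iterated sum-and-difference sets of $A \cdot A$ have size $\le K^{O(1)}|A|$. Second, $a \in A$ gives $aA, Aa \subset A \cdot A$, which by invertibility of $a$ translates to inclusions such as $A \subset a + a^{-1}(A \cdot A - A \cdot A)$ and $A \subset a^{-1}(A \cdot A)$. Combining these with a Ruzsa-covering argument applied to the bound $|A \cdot A - aA| \le K|A|$ shows that $A \cdot A$ is covered by $K^{O(1)}$ additive translates of $a(A-A)$, and that $A \cdot a^{-1} \cdot A$ in turn lies in polynomially many additive translates of the small-doubling set $a^{-1}(A \cdot A - A \cdot A)$. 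Additive Pl\"unnecke then delivers $|A + A a^{-1} A| \le K^{O(1)}|A|$.

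Theorem~\ref{sum1-thm} applied to $B$ now produces either conclusion (i) for $B$, namely $|(B - B) \backslash R^*| \ge C^{-1} K^{-O(1)}|B|$, or conclusion (ii): a finite subring $S_0 \supset B$ of size $|S_0| \le C K^{O(1)}|A|$. In case (i), since $B - B = a^{-1}(A - A)$ and left-multiplication by the invertible element $a^{-1}$ is a bijection of $R$ preserving the property of being a non-zero-divisor (if $a^{-1}x$ annihilates a nonzero $y$ from one side, so does $x = a(a^{-1}x)$, and conversely), we conclude $|(A-A)\backslash R^*| \ge C^{-1} K^{-O(1)}|A|$, giving (i) for $A$. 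In case (ii), $A = aB \subset a \cdot S_0$. To obtain the symmetric conclusion $a \cdot S = S \cdot a$, apply the argument in parallel to $B' := A \cdot a^{-1}$ (so that $A \subset S_1 \cdot a$ for a second subring $S_1$ of comparable size), and then extract a subring $S$ of comparable size contained in both $S_0$ and $a^{-1} S_1 a$ (each of which contains $a^{-1}A$) and invariant under conjugation by $a$; this yields the required dilate structure.

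The principal obstacle is the Pl\"unnecke--Ruzsa computation bounding $|A + A a^{-1} A|$: the asymmetric appearance of $a^{-1}$ in a non-commutative ring prevents a direct application of commutative additive Pl\"unnecke estimates, and the invertibility of $a$ is used essentially both to bijectively transfer small-doubling structure from $A \cdot A$ back to $A$ and to maintain the zero-divisor bookkeeping when transferring conclusion (i) from $B$ to $A$. Constructing the conjugation-invariant subring $S$ in the final step, starting from the one-sided $S_0$, also requires some care but should follow from a standard symmetrization.
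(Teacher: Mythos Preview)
Your reduction to Theorem~\ref{sum1-thm} via $B := a^{-1}A$ is precisely the alternative route the paper mentions in the Remark following its proof of Theorem~\ref{sum2-thm}. However, your execution has two real gaps.

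\textbf{The bound $|B + B\cdot B| \le K^{O(1)}|B|$.} Unwinding, this is $|A + A\,a^{-1}A| \le K^{O(1)}|A|$. Your sketch asserts that $A\,a^{-1}A$ lies in $K^{O(1)}$ additive translates of $a^{-1}(A\cdot A - A\cdot A)$, but the Ruzsa-covering inputs you cite do not give this. Covering $A\cdot A$ by translates of $a(A-A)$ or $(A-A)a$ lets you control $a^{-1}(A\cdot A)$ or $(A\cdot A)a^{-1}$, but in a non-commutative ring the $a^{-1}$ sandwiched between the two copies of $A$ in $A\,a^{-1}A$ cannot be pushed to either side, and every attempt to expand (e.g.\ $aA\,a^{-1} \subset (A-A) + Ya^{-1}$ and then multiply by $A$) produces an error term of the shape $Y a^{-1} A$ with $Y \subset A\cdot A$, whose sumset with $n(A\cdot A)-m(A\cdot A)$ you have no way to bound. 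The paper's direct proof sidesteps this entirely: it only needs $a^{-1}x \in S_1$ for each $x \in A$, i.e.\ $|a^{-1}x\cdot A + A| = |x\cdot A + a\cdot A| \le K^{O(1)}|A|$, which is immediate since $xA, aA \subset A\cdot A$.

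\textbf{The symmetrization $aS = Sa$.} Even granting $A \subset aS_0$ and $A \subset S_1 a$ from two black-box applications of Theorem~\ref{sum1-thm}, intersecting $S_0$ with $a^{-1}S_1 a$ does not produce a conjugation-invariant ring: $a(S_0 \cap a^{-1}S_1 a)a^{-1} = aS_0 a^{-1} \cap S_1$, which need not coincide with $S_0 \cap a^{-1}S_1 a$. The fully invariant intersection $\bigcap_{n\in\Z} a^n S_0 a^{-n}$ could be far too small. The paper instead uses the explicit description $S = S_1 = \{x : |xA+A| \le K^{C_0}|A|\}$: once $A \subset aS$ is known, for any $x \in S$ one has $axa^{-1}\cdot A + A \subset aS\cdot a^{-1}\cdot aS + aS = aS$, hence $|axa^{-1}\cdot A + A| \le |S| \le K^{O(1)}|A|$ and $axa^{-1} \in S$. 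This step genuinely requires opening the box.
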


Now we consider the homogeneous case in more generality, when $R$ need not contain an identity and $A$ need not contain an invertible element.  In this case, there are more examples of sets that have good additive and multiplicative properties.  For instance, if $S$ is a finite ring and $S[t] := \{ \sum_{n=0}^d a_n t^n: a_0,\ldots,a_d \in S; d \geq 0\}$ is the polynomial ring generated by $S$ and a formal variable $t$ that commutes with $S$, then the set $A := S \cdot t$ in the ring $S[t] \cdot t$ of polynomials with no constant term is such that $|A+A|, |A \cdot A|, |A \cdot A - A \cdot A| \leq |A|$, but $S[t] \cdot t$ contains no non-trivial finite subrings.  Of course, this obstruction is artificial in nature because the ambient ring $S[t] \cdot t$ can be embedded in a larger ring, such as the polynomial ring $S[t]$ or the Laurent polynomial ring $S(t)$ generated by $S$, $t$, and a formal inverse $t^{-1}$ to $t$, which does contain finite subrings, in particular the ring $S$ of constant polynomials, and once we embed into this larger ring, then $A$ does become efficiently captured by a dilate of a subring.

A generalisation of the above example occurs when one has a finite ring $S$ with an (outer) ring automorphism $\phi: S \to S$.  Then one can form the twisted polynomial ring $S[t]_\phi$ generated by $S$ and a formal variable $t$ with the relations $ta = \phi(a) t$ for all $a \in S$, or the larger twisted Laurent polynomial ring $S(t)_\phi$ generated by $S$, a formal variable $t$, and its formal inverse $t^{-1}$ with the relations $ta = \phi(a) t$ (or equivalently $a t^{-1} = t^{-1} \phi(a)$) for all $a \in S$.  Then, as before, the set $A := S \cdot t = t \cdot S$ in the ring $S[t]_\phi \cdot t$ is such that $|A+A|, |A \cdot A|, |A \cdot A - A \cdot A| \leq |A|$, but $A$ is not contained efficiently in a dilate of a subring until one embeds that ring into $S[t]_\phi$ or $S(t)_\phi$.

We can now present a converse statement.

\begin{theorem}[Homogeneous sum-product theorem in general]\label{sum3-thm} Let $R$ be a ring, and let $A \subset R$ be finite and non-empty. Suppose that $|A \cdot A - A \cdot A| \leq K |A|$ and $|A \cdot A| \geq |A|/K$ for some $K \geq 1$.  Then at least one of the following holds for some absolute constant $C > 0$:
\begin{itemize}
\item[(i)]  ($A-A$ has many zero divisors) We have $|(A-A) \backslash R^*| \geq C^{-1} K^{-C} |A|$; or
\item[(ii)] (Ring structure in a Freiman model) There exists a finite ring $R_0$ of cardinality between $|A|$ and $CK^C |A|$, an outer automorphism $\phi: R_0 \to R_0$ of that ring, and embeddings $\iota_n: \langle A^n \rangle \to R_0 \cdot t^n$ from the additive group $\langle A^n \rangle$ generated by the $n$-fold product set $A^n := A \cdot \ldots \cdot A$ to the degree $n$ component $R_0 \cdot t^n$ of the twisted polynomial ring $R_0[t]_\phi$ such that the $\iota_n$ are additive homomorphisms, and that $\iota_n(g_n) \iota_m(g_m) = \iota_{n+m}(g_n g_m)$ for all $n,m \geq 1$ and all $g_n \in \langle A^n \rangle$ and $g_m \in \langle A^m \rangle$.
\end{itemize}
\end{theorem}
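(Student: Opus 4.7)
The plan is to deduce Theorem~\ref{sum3-thm} from Theorem~\ref{sum2-thm} by constructing an appropriate Freiman-model twisted polynomial ring.

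First, I would use the hypothesis $|A \cdot A - A \cdot A| \leq K|A|$ together with the Pl\"unnecke--Ruzsa inequalities in $(R,+)$ to obtain iterated sumset bounds $|n(A \cdot A) - m(A \cdot A)| \leq K^{O(n+m)} |A|$ for all $n,m \geq 1$. Assuming conclusion (i) fails, a standard shifting and pigeonhole argument then lets me select a base point $a_0 \in A$ and pass to a comparable-size subset $A' \subset A$ such that $a_0 \in R^*$ and most differences $a - a_0$ with $a \in A'$ also lie in $R^*$. The hypotheses of the theorem are preserved on $A'$ up to replacing $K$ with $K^{O(1)}$.

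The technical heart of the proof is to build the finite ring $R_0$ and the automorphism $\phi$ abstractly. Although $a_0$ is only a non-zero-divisor and not literally invertible in $R$ (and a non-commutative $R$ may not admit an Ore localization at $a_0$), the small additive doubling of $A \cdot A$ combined with the non-zero-divisor property of $a_0$ allows me to identify each additive group $\langle A'^n \rangle$ with a single finite abelian group $R_0$ via the shift maps $\sigma_n : \langle A'^n \rangle \to \langle A'^{n+1} \rangle$, $x \mapsto a_0 x$. These shifts are injective (because $a_0 \in R^*$) and, using the Pl\"unnecke bounds on $\langle A'^n \rangle$, are also surjective, so they identify $R_0 \cong \langle A'^n \rangle$ for all $n$ with common cardinality at most $K^{O(1)} |A|$. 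The multiplication on $R_0$ is the one inherited from the graded multiplication $A'^n \cdot A'^m \to A'^{n+m}$ in $R$, transported through the shifts; compatibility of this multiplication with $\sigma_n$ forces an analogue of the twisted polynomial relation $ta = \phi(a) t$ for a unique additive automorphism $\phi$ of $R_0$, namely ``conjugation by $a_0$''. Applying Theorem~\ref{sum2-thm} inside this Freiman-modelled ring (where an identity element now exists, coming from the image of some $a_0^n$) then certifies that $R_0$ is a genuine finite ring of the required size and that $\phi$ is a ring automorphism.

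With $R_0$ and $\phi$ in hand, the embeddings $\iota_n : \langle A^n \rangle \to R_0 \cdot t^n$ are defined on generators by $\iota_n(a_1 \cdots a_n) := (\overline{a_1 \cdots a_n \cdot a_0^{-n}}) \cdot t^n$, where the interior bracket lives in $R_0$ and $a_0^{-n}$ is interpreted through the Freiman model, and then extended additively. The multiplicative compatibility $\iota_n(g_n)\iota_m(g_m) = \iota_{n+m}(g_n g_m)$ reduces, via the defining relation $ts = \phi(s) t$ of the twisted polynomial ring, to the identity $\phi^n(g_m a_0^{-m}) = a_0^n g_m a_0^{-(n+m)}$, which holds by the very definition of $\phi$ as conjugation by $a_0$. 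The main obstacles I anticipate are (a) proving that the shift maps $\sigma_n$ are surjective --- i.e., that every element of $A'^{n+1}$ lies in $a_0 \cdot \langle A'^n \rangle$ --- which is a nontrivial consequence of the small-doubling hypothesis on $A \cdot A$ propagating through all higher product sets; and (b) verifying that the induced multiplication on $R_0$ is associative and well-defined across all the identifications. A secondary issue is the outerness of $\phi$: if $\phi$ should turn out to be inner, the inner twist can be absorbed into $a_0$ and one falls back on the conclusion of Theorem~\ref{sum2-thm} in its stronger dilated-subring form.
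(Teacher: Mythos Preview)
Your plan has the right shape but contains two genuine gaps, one of which is the heart of the argument.

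\textbf{The crucial missing ingredient.} You assert that the groups $G_n := \langle (A')^n \rangle$ all have cardinality at most $K^{O(1)}|A|$ with an exponent \emph{independent of $n$}. Pl\"unnecke--Ruzsa in $(R,+)$ does not give this: it controls iterated \emph{sums} of $A\cdot A$, with bounds of the form $K^{O(n+m)}|A|$ whose exponent grows with the number of summands, and it says nothing about higher \emph{products} $A^n$. Getting a uniform-in-$n$ bound on $|G_n|$ is precisely the substantive step, and it requires a genuinely multiplicative argument. The paper obtains it from its key Proposition (the ``basic construction'' of the sets $S_r=\{x:|x\cdot A+r\cdot A|\le K^{C_0}|A|\}$): one shows $A\subset S_a$ for some $a\in (A-A)\cap R^*$, that $S_r$ is an additive group, that $S_r\cdot S_a\subset S_{ra}$, and that $|S_r|\le K^{O(1)}|A|$; inductively this yields $G_n\subset S_{a^n}$ and hence $|G_n|\le K^{O(1)}|A|$ uniformly. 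Your proposal has no analogue of this mechanism, and without it the rest of the construction cannot begin. Relatedly, invoking Theorem~\ref{sum2-thm} ``inside the Freiman model'' is circular: that theorem's proof rests on the same Proposition, and in any case it would only be applicable \emph{after} $R_0$ has already been built.

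\textbf{The surjectivity of the shifts.} Even granting the uniform bound, your claim that $\sigma_n:G_n\to G_{n+1}$ is surjective for \emph{every} $n\ge 1$ is false in general: the sequence $|G_n|$ is non-decreasing and bounded, hence eventually constant, but it may grow for small $n$ (so $|G_1|$ can be strictly smaller than the stable value $N$). The paper does not try to identify all the $G_n$ with one another; instead it defines $R_0$ abstractly as the ring of ``maximal dilations of degree $0$'', i.e.\ compatible sequences of additive maps $T_n:G_n\to G_n$ intertwining right multiplication by $G_m$. Left multiplication by $a$ gives an element $t\in R_1$, and because $\sigma_n$ is bijective for large $n$ one can build a two-sided inverse $t^{-1}\in R_{-1}$ as a \emph{partial} dilation that only needs to be defined from some $n_0$ onward. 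The embeddings $\iota_n:G_n\hookrightarrow R_0\cdot t^n$ then arise from viewing each $g\in G_n$ as a left-multiplication dilation; they are injective but need not be onto for small $n$. Your direct-limit picture can be made to work along these lines, but not via the literal surjectivity you posit.
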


Observe that if $A, R_0, \phi, \iota_n$ are as in (ii), then $A \cdot A - A \cdot A \subset \langle A^2 \rangle$ has cardinality at most $|R_0| \leq CK^C |A|$, so the conclusion (ii) is efficient up to polynomial losses.  It may be possible to combine all the separate embeddings $\iota_n: \langle A^n \rangle \to R_0 \cdot t^n$ into a single embedding of the ring generated by $A$ into some suitable ring extension of $R_0$, but we were not able to achieve this, and in any event the ``Freiman-type'' or ``graded ring homomorphism'' collection of embeddings $\iota_n$ (somewhat analogous to the embeddings $\iota_n: n A \to n B$ associated to a Freiman isomorphism $\iota_1: A \to B$ of order at least $2n$, see \cite[Section 5.3]{tao-vu}) suffice for the purposes of studying (homogeneous) iterated sum and product sets of $A$.

Finally, we return to the traditional hypotheses for the sum-product phenomenon, in which we wish to classify the cases in which $A+A$ and $A \cdot A$ are small.  For this we record a non-commutative version of the ``Katz-Tao lemma'' originating in \cite{katz-falconer} (see also \cite{bkt}) and then simplified in \cite{borg-mordell} (see also \cite[Lemma 2.53]{tao-vu}), which lets us pass from a set $A$ with $A+A$ and $A \cdot A$ both small, to a slightly smaller set $A'$ with $A' \cdot A' - A' \cdot A'$ both small, as long as we first throw away all zero divisors:

\begin{lemma}[Katz-Tao lemma]\label{ktl}  Let $R$ be a ring, and let $A \subset R^*$ be a finite non-empty set of non-zero-divisors such that $|A+A|, |A \cdot A| \leq K|A|$ for some $K \geq 1$.  Then one of the following holds:
\begin{itemize}
\item[(i)] ($A-A$ has many zero divisors) $|(A-A) \backslash R^*| \geq C^{-1} K^{-C} |A|$ for some absolute constant $C > 0$;  
\item[(ii)] (Existence of good subset) There exists a subset $A'$ of $A$ such that $|A'| \geq |A|/2K$ and $|A' \cdot A' - A' \cdot A'| = O(K^{O(1)} |A'|)$.
\end{itemize}
\end{lemma}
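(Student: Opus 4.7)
The approach combines additive Plünnecke--Ruzsa with a multiplicative energy argument, using the non-zero-divisor hypothesis $A\subset R^*$ to perform cancellations. First, applying Plünnecke--Ruzsa in the abelian group $(R,+)$ to the hypothesis $|A+A|\le K|A|$ yields $|nA-mA|\le K^{n+m}|A|$ for all $n,m\ge 0$; in particular $|A-A|\le K^2|A|$. Second, since every $a\in A$ satisfies $|aA|=|A|$ (because $a\in R^*$) and $|AA|\le K|A|$, Cauchy--Schwarz gives the multiplicative energy bound $\sum_{a,a'\in A}|aA\cap a'A|=\sum_{c\in AA}r(c)^2\ge|A|^4/|AA|\ge|A|^3/K$, where $r(c):=|\{a\in A:c\in aA\}|$. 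A standard averaging argument then produces $a_0\in A$ and a subset $A'\subset A$ with $|A'|\ge|A|/(2K)$ such that $|aA\cap a_0A|\ge|A|/(2K)$ for every $a\in A'$.

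The key cancellation identity comes next: for each $a\in A'$, the set $S_a:=\{b\in A:ab\in a_0A\}$ has $|S_a|\ge|A|/(2K)$, and for each $b\in S_a$ there is a unique $\psi_a(b)\in A$ with $ab=a_0\psi_a(b)$, using that $a_0\in R^*$. By left distributivity,
\[
ab-a'b'=a_0\bigl(\psi_a(b)-\psi_{a'}(b')\bigr)\in a_0(A-A)
\]
for all $a,a'\in A'$, $b\in S_a$, $b'\in S_{a'}$; and $|a_0(A-A)|=|A-A|\le K^2|A|$ since $a_0\in R^*$. Thus ``aligned'' product differences are tightly controlled.

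The main obstacle is transferring this aligned bound to the full difference set $A'\cdot A'-A'\cdot A'$, where the right-factors $b,b'$ range over \emph{all} of $A'$ rather than the compatibility sets $S_a,S_{a'}$. My plan is to perform a second pigeonhole to select $b_0\in A$ lying in $S_a$ for a large proportion of $a\in A'$, so that $A'\cdot b_0\subset a_0\cdot A$ on a further refinement, and then exploit the injectivity of right multiplication by $b_0\in R^*$ together with the additive Plünnecke bounds from the first step to transfer the control to $A'\cdot A'-A'\cdot A'$, absorbing a polynomial factor in $K$. The non-commutativity of $R$ (and the possible absence of an identity) makes this step delicate: in the commutative case one could simply ``divide by $a_0$'' and reduce directly to additive Plünnecke on $A$, but here one must route through the auxiliary element $b_0$ to avoid inadmissible reorderings of the form $a\cdot a_0\leftrightarrow a_0\cdot a$.

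Finally, case (i) enters whenever these multiplicative cancellations fail: if $c,c'\in A$ with $c\ne c'$ satisfy $a_0(c-c')=0$ or $b_0(c-c')=0$, then $c-c'\in A-A$ is necessarily a zero-divisor (since $a_0,b_0\ne 0$ and $c-c'\ne 0$). Aggregating such failures over the two pigeonhole selections then yields the claimed lower bound $|(A-A)\setminus R^*|\ge C^{-1}K^{-C}|A|$, placing us in case (i).
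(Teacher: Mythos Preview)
Your setup through the energy bound and the ``aligned'' containment $ab-a'b'\in a_0(A-A)$ is correct, but the transfer step to $A'\cdot A'-A'\cdot A'$ has a real gap, and it stems from your choice to pigeonhole on $|aA\cap a_0A|$ (two \emph{left} multiplications) rather than the mixed quantity $|aA\cap Ab|$ used in the paper. Your $b_0$-plan gives a refinement $A''$ with $A''b_0\subset a_0A$, and hence $(A''A''-A''A'')b_0\subset A''(a_0A)-A''(a_0A)=(A''a_0)A-(A''a_0)A$; but nothing in your construction controls $A''a_0$, and in a non-commutative ring there is no way to commute the $a_0$ past $A''$. More generally, from $aA'\subset a_0(A-A)+X_a$ one gets, upon left-multiplying by $a'$, the set $a'a_0(A-A)$ on the right-hand side, and $a'a_0$ is again uncontrolled. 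The paper's mixed pigeonhole is exactly what makes the iteration close up: from $aA\subset Ab-Ab+X_a$ one left-multiplies by $a'$ to obtain $a'aA\subset a'Ab-a'Ab+a'X_a$, and now $a'A$ itself satisfies the \emph{same} containment, yielding $a'aA\subset (2A-2A)b^2+\mathbf{O}(K^{O(1)})$ and hence $d\cdot A\subset(4A-4A)b^2+\mathbf{O}(K^{O(1)})$ for every $d\in A'A'-A'A'$; a final pigeonhole-and-count then bounds $|A'A'-A'A'|$.

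Your account of where case~(i) enters is also off. Since $a_0,b_0\in A\subset R^*$, the equation $a_0(c-c')=0$ already forces $c=c'$; no zero-divisors in $A-A$ are produced this way. In the paper, case~(i) appears only at the very last counting step: having found, for each $d\in A'A'-A'A'$, many $f\in A-A$ with $df\in(8A-8A)b^2$, one needs $\gg K^{-O(1)}|A|$ of these $f$ to lie in $R^*$ so that the pair $(f,g)$ determines $d$ uniquely (via right-cancellation of $f$). If instead $(A-A)\setminus R^*$ is large, one is in case~(i). So the zero-divisor alternative is about $A-A$, not about cancellation by the fixed pivots $a_0,b_0$.
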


The commutative version of this claim (without the need for the option (i)) was established in \cite[Lemma 2.53]{tao-vu}.  We do not know if the option (i) can similarly be removed in the noncommutative setting; one may need to first strengthen the bound $|A \cdot A| \leq K|A|$ to $|A \cdot A \cdot A| \leq K|A|$ for this (cf. \cite{tao-noncomm}).

Our methods here are elementary, relying entirely on Pl\"unnecke-Ruzsa sum set theory (see e.g. \cite[Chapter 2]{tao-vu} for a detailed treatment of this topic), and an analysis of certain key sets $S_r$ defined in \eqref{sr}, which roughly speaking contain the elements $x \in R$ for which $x \cdot A$ and $r \cdot A$ are ``parallel''.  

These elementary methods are able to treat sum-product estimates in any ring that does not have too many ``scales''; the sum-product phenomenon in multi-scale situations such as continuous subsets of $\R$ \cite{borg-ring}, or subsets of $\Z_p^m$, $\Z_p[t]/\langle t^m\rangle$ or $\Z_{p^m}$ \cite{borg-finite-exp} for large $m$ seem to require a more sophisticated analysis which we do not study further here, due to the presence of many zero-divisors.  By the same token, however, due to the soft and elementary nature of our methods, our results do not distinguish between finite or infinite rings, or between zero characteristic and positive characteristic.

\subsection{Organisation of the paper}

In Section \ref{sumsetsec} we recall the (standard) sum set estimates from Pl\"unnecke-Ruzsa theory that we shall need, together with a proof of Lemma \ref{ktl} in Section \ref{ktl-proof}.  In Section \ref{basic-sec} we prove the key proposition, Proposition \ref{basic}, that analyses the sets $S_r$ mentioned above, allowing us to quickly prove our main theorems in Section \ref{main-sec}.  In the final section, Section \ref{final-sec}, we specialise our theorems to specific rings such as division rings (or boundedly many products of division rings), cyclic groups, and algebras, to illustrate the results and also to recover some (but certainly not all) of the earlier sum-product results in the literature.

\subsection{Acknowledgments}

The author thanks Emmanuel Breuillard for posing this question, Jean Bourgain, Elon Lindenstrauss, Alex Gamburd, and Lior Silberman for useful discussions, Igor Shparlinski and Van Vu for some references, and an anonymous commenter and the anonymous referee for some corrections.  The author is supported by NSF grant DMS-0649473 and a grant from the Macarthur Foundation.

\section{Sum set estimates}\label{sumsetsec}

We recall some basic estimates from the Pl\"unnecke-Ruzsa theory of sum set estimates, as recorded for instance in \cite[Chapter 2]{tao-vu}.

\begin{lemma}[Ruzsa triangle inequality]\label{ruzsa-triangle}\cite{ruzsa}  If $A, B, C$ are finite non-empty subsets of an additive group $G$, then $|A-C| \leq |A-B| |B-C| / |B|$.
\end{lemma}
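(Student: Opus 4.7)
The plan is to prove the inequality by constructing an explicit injection from $B \times (A-C)$ into $(A-B) \times (B-C)$, which immediately yields $|B| \cdot |A-C| \leq |A-B| \cdot |B-C|$ and hence the desired bound after dividing by $|B|$.

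First, for each $x \in A-C$ I would fix, once and for all, a choice of representatives $a(x) \in A$ and $c(x) \in C$ with $x = a(x) - c(x)$; this uses only that $A-C$ is nonempty together with the axiom of choice over the finite set $A-C$. Then I would define the map
\[
\phi: B \times (A-C) \to (A-B) \times (B-C), \qquad \phi(b,x) := \bigl(a(x) - b,\ b - c(x)\bigr).
\]
Clearly $\phi$ takes values in $(A-B) \times (B-C)$ since $a(x) \in A$, $c(x) \in C$, and $b \in B$.

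The key step is injectivity of $\phi$. Given the output pair $(y_1,y_2) = (a(x)-b,\ b-c(x))$, one recovers the second coordinate of the input by computing
\[
y_1 + y_2 = a(x) - c(x) = x,
\]
so $x$ is determined. But once $x$ is known, the chosen representatives $a(x)$ and $c(x)$ are determined, and then $b = a(x) - y_1$ is determined as well. Hence $\phi$ is injective, giving $|B|\,|A-C| \leq |A-B|\,|B-C|$ as required.

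There is essentially no obstacle here; the only subtlety is the choice function $x \mapsto (a(x), c(x))$, whose role is precisely to allow the recovery step $x \mapsto (a(x), c(x))$ to be well-defined and thereby make the map $\phi$ injective. No Plünnecke-type machinery or iteration is needed, and the argument is symmetric in a way that makes clear why $A-B$ and $B-C$ (rather than, say, $A+B$ and $B+C$) are the natural objects to appear.
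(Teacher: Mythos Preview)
Your proof is correct and is essentially the same argument as the paper's: the paper observes that each $x\in A-C$ has at least $|B|$ representations as a sum of an element of $A-B$ and an element of $B-C$ (namely $(a(x)-b)+(b-c(x))$ as $b$ ranges over $B$), which is exactly your injection $\phi$ rephrased as a counting statement.
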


\begin{proof} From the identity $a-c = (a-b)+(b-c)$ we see that every element of $A-C$ has at least $|B|$ representations as the sum of an element of $A-B$ and an element of $B-C$.  The claim follows.
\end{proof}

\begin{lemma}[Ruzsa covering lemma]\label{ruzsa-cover}\cite{ruzsa-group}  If $A, B$ are finite non-empty subsets of an additive group $G$, then $A \subset B-B + \bigO(\frac{|A+B|}{|B|})$, and similarly $A \subset B-B + \bigO(\frac{|A-B|}{|B|})$.  (Recall that $\bigO(N)$ denotes an unspecified finite non-empty set of cardinality $O(N)$.)
\end{lemma}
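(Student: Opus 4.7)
The plan is to use the standard greedy disjoint-translate argument that is essentially the textbook proof of this lemma. I will handle both inequalities by the same method, only changing whether I work with translates $x+B$ or $x-B$.

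First I would select, by a greedy/maximality procedure, a set $X = \{x_1, \dots, x_k\} \subset A$ which is maximal subject to the constraint that the translates $x_i + B$ are pairwise disjoint (to get the $|A+B|/|B|$ bound). Since each translate $x_i + B$ is contained in $A + B$ and has cardinality exactly $|B|$, disjointness immediately yields $k |B| \leq |A+B|$, hence $k = O(|A+B|/|B|)$, i.e.\ $X = \bigO(|A+B|/|B|)$ in the notation of the paper.

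Next I would exploit maximality: for any $a \in A$, the translate $a + B$ must meet some $x_i + B$, for otherwise we could enlarge $X$ by adjoining $a$. So there exist $b, b' \in B$ with $a + b = x_i + b'$, which rearranges to $a = x_i + (b' - b) \in x_i + B - B$. Therefore $A \subset X + B - B$, which combined with the size bound on $X$ proves the first inclusion.

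For the second inclusion, I would repeat the same argument verbatim but with the roles reversed: take $X = \{x_1, \dots, x_k\} \subset A$ maximal such that the translates $x_i - B$ are pairwise disjoint. These translates all lie in $A - B$ and each has size $|B|$, giving $k = O(|A-B|/|B|)$. Maximality forces, for every $a \in A$, an identity $a - b = x_i - b'$ for some $b,b' \in B$, so $a = x_i + (b' - b) \in x_i + B - B$, and again $A \subset X + B - B$.

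There is no real obstacle here — the only point requiring any care is verifying that the greedy procedure terminates (which it does trivially because $A$ is finite) and that the maximality argument applies to \emph{every} $a \in A$, including the $x_i$ themselves (for which the inclusion $a \in x_i + B - B$ holds tautologically since $0 \in B - B$). The two inclusions are fully symmetric once one decides whether to work with $+B$ or $-B$ translates.
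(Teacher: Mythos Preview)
Your proof is correct and follows essentially the same approach as the paper: take a maximal $X \subset A$ with disjoint translates $x+B$, bound $|X|$ by $|A+B|/|B|$, and use maximality to get $A \subset X + B - B$. The only cosmetic difference is that for the second inclusion the paper simply substitutes $-B$ for $B$ (noting $B-B$ is unchanged), whereas you rerun the argument with $x-B$ translates; these are the same thing.
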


\begin{proof}  Let $X$ be a maximal subset of $A$ with the property that the sets $x+B$ for $x \in X$ are disjoint.  One easily verifies that $|B| |X| \leq |A+B|$ and that $A \subset B-B+X$, and the first claim follows. The second claim follows by replacing $B$ by $-B$ (note that $B-B$ remains unchanged by this reflection).
\end{proof}

\begin{lemma}[Pl\"unnecke-Ruzsa sumset estimate]\label{plunruz}  Let $A, B$ be finite non-empty subsets of an additive group such that $|A+B| \leq K |A|$ and $B \geq |A|/K$. Then one has $|n_1 A - n_2 A + n_3 B - n_4 B| \leq K^{O_{n_1,n_2,n_3,n_4}(1)} |A|$ for all $n_1,n_2,n_3,n_4 \geq 0$.
\end{lemma}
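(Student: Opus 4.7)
The plan is to invoke the standard Pl\"unnecke-Ruzsa inequality and then glue its estimates together with two applications of Lemma \ref{ruzsa-triangle}. The main input, quoted from \cite[Chapter 2]{tao-vu}, is the Pl\"unnecke-Ruzsa theorem in the following form: whenever $X,Y$ are finite non-empty subsets of an abelian group with $|X+Y| \le L|X|$, there exists a non-empty $X' \subseteq X$ such that $|X' + nY - mY| \le L^{n+m}|X'|$ for all $n,m \ge 0$.

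Applying this to the pair $(A,B)$ with $L=K$, I obtain a non-empty $A' \subseteq A$ with $|A' + n_3 B - n_4 B| \le K^{n_3+n_4}|A'|$. Applying it instead to the pair $(B,A)$ -- noting that the hypothesis $|B| \ge |A|/K$ gives $|B+A| = |A+B| \le K|A| \le K^2|B|$ -- yields a non-empty $B' \subseteq B$ with $|B' + n_1 A - n_2 A| \le K^{2(n_1+n_2)}|B'|$.

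The bound on the purely $A$-part then follows from Lemma \ref{ruzsa-triangle} with pivot $-B'$:
\[ |n_1 A - n_2 A| \;\le\; \frac{|n_1 A + B'|\,|n_2 A + B'|}{|B'|} \;\le\; K^{2(n_1+n_2)}|B'| \;\le\; K^{2(n_1+n_2)+1}|A|, \]
where the last step uses $|B'| \le |B| \le |A+B| \le K|A|$. Since $A' \subseteq A$ the monotonicity of sumsets also gives $|n_1 A - n_2 A - A'| \le |n_1 A - (n_2+1)A| \le K^{O(n_1+n_2)}|A|$.

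Finally, one further application of Lemma \ref{ruzsa-triangle}, now with pivot $A'$, glues everything together:
\[ |n_1 A - n_2 A + n_3 B - n_4 B| \;\le\; \frac{|n_1 A - n_2 A - A'|\,|A' + n_3 B - n_4 B|}{|A'|} \;\le\; K^{O_{n_1,n_2,n_3,n_4}(1)}|A|, \]
which is the desired estimate. The only nontrivial ingredient is the quoted Pl\"unnecke-Ruzsa theorem; the rest is routine bookkeeping with the triangle inequality, the only mildly delicate point being the use of $|B|\ge|A|/K$ to reverse the roles of $A$ and $B$ so that sumsets of $A$ alone can also be controlled.
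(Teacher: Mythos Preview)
The paper itself gives no proof here beyond the citation to \cite[Proposition~2.27]{tao-vu}, so your write-up is already more detailed than what appears in the paper. The strategy you outline---apply Pl\"unnecke--Ruzsa separately to $(A,B)$ and to $(B,A)$ (the latter using $|B|\ge|A|/K$), then glue the two estimates with two applications of Lemma~\ref{ruzsa-triangle}---is the standard route and is essentially how the cited proposition is established.

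One technical point deserves care. The version of Pl\"unnecke--Ruzsa you invoke, namely that $|X+Y|\le L|X|$ alone yields a nonempty $X'\subseteq X$ with $|X'+nY-mY|\le L^{n+m}|X'|$, is stronger than the form usually stated. The standard conclusions are either $|nY-mY|\le L^{n+m}|X|$ (no $X'$ in the sumset), or, via Petridis, a subset $X'$ with $|X'+nY|\le L^{n}|X'|$ for all $n\ge 0$; controlling the mixed expression $X'+nY-mY$ requires in addition a bound on $|X-Y|/|X|$, so that one may feed both $Y$ and $-Y$ into the multi-summand Pl\"unnecke inequality. From $|X+Y|\le L|X|$ alone no such bound is available in general.

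In the present lemma this is harmless, because the second hypothesis $|B|\ge|A|/K$ supplies exactly what is missing: from $|A+B|\le K|A|$ one has $|2B|\le K^{2}|A|$ by the usual Pl\"unnecke bound, and then Lemma~\ref{ruzsa-triangle} with pivot $-B$ gives
\[
|A-B|\ \le\ \frac{|A+B|\,|2B|}{|B|}\ \le\ K^{O(1)}|A|.
\]
With both $|A+B|\le K|A|$ and $|A-B|\le K^{O(1)}|A|$ in hand, the multi-summand Pl\"unnecke inequality yields an $A'\subseteq A$ with $|A'+n_{3}B-n_{4}B|\le K^{O(n_{3}+n_{4})}|A'|$, and your argument then runs exactly as written. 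The only cost is $K^{O(n_{3}+n_{4})}$ in place of $K^{n_{3}+n_{4}}$, which is immaterial for the conclusion; the only mild inaccuracy is that the hypothesis $|B|\ge|A|/K$ is already needed for your first step, not just for reversing the roles of $A$ and $B$.
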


\begin{proof} See e.g. \cite[Proposition 2.27]{tao-vu}.
\end{proof}

We will also routinely use elementary identities and inclusions in rings $R$ such as
\begin{align*}
A \cdot B \pm A \cdot C &\subset A \cdot (B \pm C)\\
a \cdot A \pm a \cdot B &= a \cdot (A\pm B)\\
(A \cdot B) \cdot C &= A \cdot (B \cdot C)\\
(A+B)+C &= A+(B+C)\\ 
A+B &= B+A \\
\bigO( N ) + \bigO( M ), \bigO(N) \cdot \bigO(M) &= \bigO( NM ) 
\end{align*}
for $A,B,C \subset R$, $a \in R$, and $N,M \geq 1$ without any further comment.

\begin{remark} There is also a non-commutative version of the above theory for use in multiplicative groups, see \cite{tao-noncomm}.  We will avoid using this theory here, though, since the multiplicative structure of a ring is not quite as strong as that of a group, even if we restrict to the cancellative semigroup $R^*$ of non-zero-divisors.  It seems of interest to develop such a theory for this semigroup, though.
\end{remark}

\subsection{Proof of Lemma \ref{ktl}}\label{ktl-proof}

We can now prove Lemma \ref{ktl}.  Let $A, K$ be as in that lemma.  From the identities
$$ |A|^2 = \sum_{x \in A \cdot A} |\{ (a,b) \in A \times A: ab = x \}|$$
and
$$ \sum_{x \in A \cdot A} |\{ (a,b) \in A \times A: ab = x \}|^2  = \sum_{a,b\in A} |a \cdot A \cap A \cdot b|$$ 
we conclude that
$$ \sum_{a,b \in A} |a \cdot A \cap A \cdot b| \geq \frac{|A|^4}{|A \cdot A|} \geq |A|^3/K.$$
Since $|a \cdot A \cap A \cdot b| \leq |A|$, we conclude that $|a \cdot A \cap A \cdot b| \geq |A|/2K$ for at least $|A|^2/2K$ pairs $(a,b)$.  By the pigeonhole principle again, we can find $b_0 \in A$ such that the set
$$ A' := \{ a \in A: |a \cdot A \cap A \cdot b| \geq |A|/2K \}$$
has cardinality at least $|A|/2K$.

Now let $a, a' \in A'$.  Then
$$ |(a \cdot A \cap A \cdot b) + A \cdot b| \leq |(A+A) \cdot b| \leq K |A|$$
and
$$ |(a \cdot A \cap A \cdot b) + a \cdot A| \leq |a \cdot (A+A)| \leq K |A|$$
and thus by the Ruzsa triangle inequality (Lemma \ref{ruzsa-triangle}) we have
$$ |a \cdot A - A \cdot b| \leq O( K^{O(1)} ) |A|$$
and thus by the covering lemma (Lemma \ref{ruzsa-cover})
\begin{equation}\label{aa}
a \cdot A \subset A \cdot b - A \cdot b + \bigO( K^{O(1)} ).
\end{equation}
Similarly
$$ a' \cdot A \subset A \cdot b - A \cdot b + \bigO( K^{O(1)} ).$$
Multiplying the latter by $a$ we conclude
$$ aa' \cdot A \subset a \cdot A \cdot b - a \cdot A \cdot b + \bigO( K^{O(1)} )$$
and hence by \eqref{aa}
we have
$$ aa' \cdot A \subset A \cdot b^2 - A \cdot b^2 + A \cdot b^2 - A \cdot b^2 + \bigO( K^{O(1)} )$$
and thus if $a'', a'''$ are also in $A'$ then
$$ (aa'-a''a''') \cdot A \subset (4A-4A) \cdot b^2 + \bigO( K^{O(1)} ).$$
In other words, for each $d \in A' \cdot A' - A' \cdot A'$ we have
$$ d \cdot A \subset (4A-4A) \cdot b^2 + \bigO( K^{O(1)} ).$$
By the pigeonhole principle, for each such $d$ we can find $x \in R$ and\footnote{Here and in the sequel we use $X \ll Y$ or $Y \gg X$ to denote the estimate $X \leq CY$ for some constant $C$.} $\gg K^{-O(1)} |A|$ elements $a$ in $A$ such that $da \in (4A-4A) \cdot b^2 + x$, and thus we can find $\gg K^{-O(1)} |A|^2$ pairs $a,a' \in A$ such that $d(a-a') \in (8A-8A) \cdot b^2$, and thus there are $\gg K^{-O(1)} |A|$ elements $f$ of $A-A$ such that $df \in (8A-8A) \cdot b^2$.  Since we may assume we are not in option (i) for some suitable choice of constants $c, C > 0$, we conclude that there are $\gg K^{-O(1)} |A|$ pairs $(f,g) \in (A-A)\cap R^* \times (8A-8A)$ such that $df = gb^2$.  But each pair $(f,g)$ can be associated to at most one $d$, thus
$$ |A' \cdot A' - A' \cdot A'| \ll K^{O(1)} |A-A| |8A-8A| / |A|$$
and hence by sumset estimates (Lemma \ref{plunruz})
$$ |A' \cdot A' - A' \cdot A'| \ll K^{O(1)} |A|$$
and the lemma follows.

\section{The key proposition}\label{basic-sec}

Our analysis of sets $A$ with good additive and multiplicative properties will hinge around the properties of certain very structured sets $S_r$ associated to each ring element $r \in R$, which roughly speaking corresponds to the ``dilate'' of $A$ (or of the ``completion'' of $A$) that contains $r$.  The precise structure theory we need is contained in the following proposition.

\begin{proposition}[Basic construction]\label{basic} If $C_0 > 0$ is a sufficiently large constant, then the following statements hold. Let $R$ be a ring, and let $A \subset R$ be a non-empty finite set.  Suppose that $|A \cdot A - A \cdot A| \leq K |A|$ and $|A \cdot A| \geq |A|/K$ for some $K \geq 2$, and suppose that $|(A-A) \backslash R^*| < K^{-C_0}|A|$.  For each $r \in R$, define the set
\begin{equation}\label{sr}
S_r := \{ x \in R: |x \cdot A + r \cdot A| \leq K^{C_0} |A| \}.
\end{equation}
\begin{itemize}
\item[(i)] (Self-improving property) If $r \in R^*$ and $x \in S_r$, then $|x \cdot A + r \cdot A| \leq K^{O(1)} |A|$.
\item[(ii)] (Size bound) If $r \in R^*$, then $|S_r| \leq K^{O(1)} |A|$; in particular, $S_r$ is finite.
\item[(iii)] (Additive structure) If $r \in R^*$, then $S_r$ is an additive group.
\item[(iv)] (Ring structure) If $R$ has an identity $1$, then $S_1$ is a ring.
\item[(v)] (Right-multiplicative structure) If $r \in R^*$ and $a \in (A-A) \cap R^*$, then $S_r \cdot S_a \subset S_{ra}$.
\item[(vi)] (Left-multiplicative structure) If $r,s \in R$ then $s \cdot S_r \subset S_{sr}$.
\item[(vii)] (Reflexivity) For every $r \in R$, we have $r \in S_r$.
\item[(viii)] (Symmetry) If $r,s \in R$, then $r \in S_s$ if and only if $s \in S_r$.
\item[(ix)] (Transitivity) If $r,s \in R^*$ and $S_r \cap S_s \cap R^* \neq \emptyset$, then $S_r = S_s$.
\end{itemize}
\end{proposition}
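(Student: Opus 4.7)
The plan is to prove the nine claims in a carefully chosen order, using Pl\"unnecke-Ruzsa sumset theory combined with the zero-divisor hypothesis as the main tools. As a preliminary step I would select $\alpha \in (A-A) \cap R^*$, which exists because $|A-A| \geq |A| > K^{-C_0}|A| \geq |(A-A) \setminus R^*|$. Writing $\alpha = a_1 - a_2$ with $a_i \in A$ yields $\alpha \cdot (A \pm A) \subset 2(A \cdot A) - 2(A \cdot A)$, and since multiplication by $\alpha$ is injective, applying Pl\"unnecke-Ruzsa (Lemma \ref{plunruz}) to the set $A \cdot A$ (whose doubling is controlled by $K$, since $|A \cdot A| \geq |A|/K$ and $|A \cdot A - A \cdot A| \leq K|A|$) gives $|A \pm A| \leq K^{O(1)}|A|$. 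Consequently $|r \cdot A - r \cdot A| = |r \cdot (A-A)| \leq K^{O(1)}|A|$ for any $r \in R$, and $|r \cdot A| = |A|$ whenever $r \in R^*$.

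Several of the bulleted claims now follow immediately. Symmetry (viii) is clear from $x \cdot A + r \cdot A = r \cdot A + x \cdot A$; left-multiplicativity (vi) from $sx \cdot A + sr \cdot A = s \cdot (x \cdot A + r \cdot A)$; and reflexivity (vii) from $|r \cdot A + r \cdot A| = |r \cdot (A+A)| \leq |A+A| \leq K^{O(1)}|A| \leq K^{C_0}|A|$ once $C_0$ is chosen large enough relative to the implied constants. For the self-improving property (i), I would combine one-sided Pl\"unnecke (giving $|r \cdot A + n(x \cdot A)| \leq K^{nC_0}|A|$ with no lower bound on $|x \cdot A|$ required) with Ruzsa's triangle inequality (Lemma \ref{ruzsa-triangle}) using $-r \cdot A$ as a pivot of size $|A|$, yielding $|n_1 (r \cdot A) - n_2(r \cdot A) + n_3(x \cdot A) - n_4(x \cdot A)| \leq K^{O(1)}|A|$ for any fixed $n_i$.

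For additive closure (iii), given $x, y \in S_r$, I would apply Ruzsa's triangle inequality with pivot $-r \cdot A$ to the sets $x \cdot A + r \cdot A$ and $y \cdot A$, obtaining
$$|(x-y) \cdot A + r \cdot A| \leq |x \cdot A - y \cdot A + r \cdot A| \leq |x \cdot A + 2(r \cdot A)| \cdot |r \cdot A + y \cdot A|/|r \cdot A| \leq K^{O(1)}|A|,$$
so $x - y \in S_r$. The ring structure (iv) is obtained by composing Ruzsa coverings: $y \in S_1$ gives $y \cdot A \subset A - A + \bigO(K^{O(1)})$, so that $xy \cdot A = x \cdot (y \cdot A) \subset x \cdot A - x \cdot A + \bigO(K^{O(1)}) \subset 2A - 2A + \bigO(K^{O(1)})$, hence $|xy \cdot A + A| \leq K^{O(1)}|A|$. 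Right-multiplicativity (v) is similar: apply (i) to $y \in S_a$ to write $y \cdot A \subset a \cdot A - a \cdot A + \bigO(K^{O(1)})$, multiply by $x$ on the left, and combine with the control on $x \in S_r$. Transitivity (ix) follows by chaining Ruzsa's triangle through the common pivot $t \in S_r \cap S_s \cap R^*$ (using $|t \cdot A| = |A|$): writing $|x \cdot A + s \cdot A| \leq |x \cdot A + t \cdot A| \cdot |t \cdot A + s \cdot A|/|t \cdot A|$, each numerator factor is $K^{O(1)}|A|$ by (i), so $x \in S_s$, and conversely.

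For the size bound (ii), the map $\Phi(x) := x \cdot \alpha$ is injective on $S_r$ since $\alpha \in R^*$, and by (i) together with Ruzsa covering (Lemma \ref{ruzsa-cover}) one has $x \cdot A \subset r \cdot A - r \cdot A + E_x$ with $E_x \subset x \cdot A$ and $|E_x| \leq K^{O(1)}$, so
$$\Phi(x) = x \cdot a_1 - x \cdot a_2 \in r \cdot (2A - 2A) + (E_x - E_x).$$
The main obstacle I anticipate lies here: the correction sets $E_x - E_x$ depend on $x$, and bounding $\bigcup_{x \in S_r}(E_x - E_x)$ uniformly requires exploiting the additive-group structure of $S_r$ from (iii), for instance by showing that $\Phi(S_r)$ is itself an additive subgroup of $R$ that sits inside a single $K^{O(1)}$-translate of $r \cdot (2A - 2A)$; this uniformity step is the one genuinely nontrivial ingredient, whereas the remaining claims are relatively routine applications of Pl\"unnecke-Ruzsa and Ruzsa covering combined with the zero-divisor hypothesis.
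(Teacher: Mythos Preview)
Your approach to (i) does not establish the self-improving property, and this is the crux of the entire proposition. The content of (i) is that the bound $|x \cdot A + r \cdot A| \leq K^{C_0}|A|$ upgrades to $|x \cdot A + r \cdot A| \leq K^{c}|A|$ for some \emph{absolute} constant $c$, independent of $C_0$. Your Pl\"unnecke-based argument produces only $|n_1(r \cdot A) - n_2(r \cdot A) + n_3(x \cdot A) - n_4(x \cdot A)| \leq K^{O(C_0)}|A|$, since the doubling constant you feed into Pl\"unnecke is $K^{C_0}$; in particular, for $n_1=n_3=1$, $n_2=n_4=0$ you merely recover the hypothesis. Without a genuine improvement here, the closure arguments in (iii), (iv), (v), (ix) collapse: e.g.\ in (iii) you need $|(x+y)\cdot A + r \cdot A| \leq K^{C_0}|A|$, but chaining two $K^{C_0}$-bounds through Ruzsa only gives $K^{O(C_0)}$, which is strictly larger. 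The paper's mechanism is quite different: from $|x\cdot A + r\cdot A| \leq K^{C_0}|A|$ and Cauchy--Schwarz one finds at least $K^{-C_0}|A|$ differences $d \in A-A$ with $xd \in r\cdot(A-A)$; the zero-divisor hypothesis then guarantees at least one such $d$ lies in $R^*$. With this single $d$ in hand, one bootstraps via $xd\cdot A \subset r\cdot(A\cdot A - A\cdot A)$ and Ruzsa covering applied to the \emph{original} hypothesis $|A\cdot A - A\cdot A| \leq K|A|$, so every subsequent exponent is absolute. The $C_0$ is spent only in the pigeonhole step and never reappears.

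Your treatment of (ii) is also incomplete, as you yourself note: the correction sets $E_x$ vary with $x$, and the group structure of $S_r$ does not obviously force $\Phi(S_r)$ into a single translate. The paper bypasses this entirely with a double-counting argument: the proof of (i) actually shows that each $x \in S_r$ admits $\gg K^{-O(1)}|A|$ pairs $(d,d') \in ((A-A)\cap R^*) \times (A-A)$ with $xd = rd'$, and each such pair determines $x$ uniquely (since $d \in R^*$). Since the total number of pairs is at most $|A-A|^2 \leq K^{O(1)}|A|^2$, the bound $|S_r| \leq K^{O(1)}|A|$ follows. Finally, your sketch of (v) elides the left/right multiplication issue: multiplying $y\cdot A \subset a\cdot A - a\cdot A + \bigO(K^{O(1)})$ on the left by $x$ yields $xa\cdot A$, not something controllable by $ra$; the paper instead passes through $A\cdot a$ (using $a \in A-A$ to bound $|a\cdot A + A\cdot a|$) so that the $x$-action lands on $x\cdot A$ rather than on $a$.
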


\begin{remark} The results here can be viewed as a refinement of the analysis of the ``good'' elements $x$ (for which $x \cdot A \subset A - A + \bigO(K^{O(1)})$) in \cite[Proposition 3.3]{bkt}.
\end{remark}

\begin{proof}  We first make the observation that if $|A \cdot A - A \cdot A| \leq K |A|$, then by sum set estimates (Lemma \ref{plunruz}) we have $|A \cdot A + A \cdot A - A \cdot A - A \cdot A| \leq K^{O(1)} |A|$ and thus $|(A-A) \cdot (A-A)| \leq K^{O(1)} |A|$.  Since $A-A$ contains at least one element in $R^*$, we conclude in particular that
\begin{equation}\label{Adiff}
|A-A| \leq K^{O(1)} |A|.
\end{equation}

We now show the (crucial\footnote{Indeed, the self-improving property allows us to absorb all losses of $K^{O(1)}$ which arise from our heavy use of the Pl\"unnecke-Ruzsa sumset theory.}) self-improving property (i).  Let $r \in R^*$, and let $x \in S_r$.  The map $(a_1,a_2) \to xa_1+ra_2$ maps $A \times A$ to a set of cardinality at most $K^{C_0} |A|$.  By Cauchy-Schwarz, we can thus find at least $K^{-C_0} |A|^3$ quadruplets $(a_1,a_2,a'_1,a'_2) \in A \times A \times A \times A$ such that $xa_1+ra_2 = xa'_1+ra'_2$, or equivalently $x(a_1-a'_1) = r(a'_2-a_2)$.  Since each difference $d \in A-A$ can be represented at most $|A|$ times in the form $a-a'$ for $a,a' \in A$, we conclude that there are at least $K^{-C_0} |A|$ elements $d, d' \in A-A$ such that $xd=rd'$; since $r$ is not a zero-divisor, we conclude that there are at least $K^{-C_0} |A|$ elements $d \in A-A$ such that $xd \in r \cdot (A-A)$.  Thus there exists at least one $d \in (A - A) \cap R^*$ such that $xd \in r \cdot (A-A)$.

Let $d$ be as above.  Then we have $xd\cdot A \in r\cdot (A \cdot A-A \cdot A)$.  On the other hand, as $d$ is not a zero-divisor, we have $|d \cdot A| = |A|$.  Since $|A \cdot A| \geq |A|/K$ and $|d \cdot A-A \cdot A| \subset |A \cdot A-A \cdot A - A \cdot A| \leq K^{O(1)} |A|$ (by Lemma \ref{plunruz}), we see from Ruzsa's covering lemma (Lemma \ref{ruzsa-cover}) that $A \cdot A \subset d \cdot A-d \cdot A + \bigO(K^{O(1)})$.  Thus
$$ x \cdot A \cdot A \subset r\cdot (A \cdot A-A \cdot A)-r\cdot (A \cdot A-A \cdot A) + \bigO(1) = r \cdot (2(A \cdot A) - 2(A \cdot A)) + \bigO(K^{O(1)})$$
and thus
$$ (x \cdot A + r \cdot A) \cdot (A-A) \subset r \cdot( 5(A \cdot A) - 5(A \cdot A)) + \bigO(K^{O(1)}).$$
By sum set estimates we conclude
$$|(x \cdot A + r \cdot A) \cdot (A-A)| \leq K^{O(1)} |A|.$$
Since $A-A$ contains at least one non-zero divisor, the claim (i) follows.

Now we prove the size bound (ii).  Let $r \in R^*$ and let $x \in S_r$.  By the self-improving property we have $|x \cdot A + r \cdot A| \leq K^{O(1)} |A|$.  Arguing as in the proof of (i), we have at least $K^{-O(1)}|A|$ elements $d \in A-A$ such that $xd \in r \cdot (A-A)$.  If $C_0$ is large enough, we can thus find at least $K^{-O(1)} |A|$ pairs $(d,d') \in (A-A) \times (A-A)$ such that $xd=rd'$ and $d$ is not a zero-divisor.  But each such pair $(d,d')$ corresponds to at most one $x$, while the total number of pairs $(d,d')$ is at most $|A-A|^2 \leq K^{O(1)} |A|^2$ by \eqref{Adiff}.  The claim (ii) follows.

Now we show (iii).  Let $r \in R^*$. If $x,y \in S_r$, then $|x \cdot A+r \cdot A| \leq K^{O(1)} |A|$ and $|y\cdot A+r \cdot A| \leq K^{O(1)} |A|$.  By the Ruzsa covering lemma (Lemma \ref{ruzsa-cover}) we conclude that 
\begin{equation}\label{xaa}
x \cdot A \subset r \cdot A- r \cdot A + \bigO(K^{O(1)})
\end{equation}
and similarly
\begin{equation}\label{yaa}
y \cdot A \subset r \cdot A- r \cdot A + \bigO(K^{O(1)}), 
\end{equation}
and thus $(x+y)\cdot A + r \cdot A \subset r\cdot(3A-2A) + \bigO(K^{O(1)})$.  Applying the sumset estimate (Lemma \ref{plunruz}), we obtain $|(x+y)\cdot A + r \cdot A| \leq K^{O(1)} |A|$, and so $x+y \in S$ if $C_0$ is large enough.  Thus $S$ is closed under addition; since it is also finite, it must be an additive group.  This establishes (iii).

Now we prove (iv).  In view of (iii), it suffices to show that $S_1$ is closed under multiplication.  Let $x, y \in S_1$, thus by (i) we have $|x \cdot A + A|, |y \cdot A + A| \leq K^{O(1)} |A|$.   Multiplying $y \cdot A + A$ by $x$ we have $|xy \cdot A + x \cdot A| \leq K^{O(1)} |A|$, and thus by Ruzsa's triangle inequality (Lemma \ref{ruzsa-triangle}) and sum set estimates (Lemma \ref{plunruz}) we have $|xy \cdot A + A| \leq K^{O(1)} |A|$, and thus (if $C_0$ is large enough) $xy \in S_1$, yielding the desired closure property.

Now we show (v).  Let $r \in R^*$ and $a \in (A-A) \cap R^*$, and let $x \in S_r$ and $y \in S_a$.
By (i), we have $|x \cdot A + r \cdot A| \leq K^{O(1)} |A|$ and $|y \cdot A + a \cdot A| \leq K^{O(1)} |A|$.  Also, we have
$$ a \cdot A + A \cdot a \subset A \cdot A - A \cdot A + A \cdot A - A \cdot A$$
and thus by sum set estimates
$$ |a \cdot A + A \cdot a| \leq K^{O(1)} |A|$$
and thus by the Ruzsa triangle inequality
$$ |y \cdot A + A \cdot a| \leq K^{O(1)} |A|$$
and thus by the Ruzsa covering lemma
$$ y \cdot A \subset A \cdot a - A \cdot a + \bigO( K^{O(1)} ).$$
Multiplying by $x$ we obtain
$$ xy \cdot A \subset x \cdot A \cdot a - x \cdot A \cdot a + \bigO( K^{O(1)} ).$$
Meanwhile, by the Ruzsa covering lemma again we have
$$ x \cdot A \subset r \cdot A - r \cdot A + \bigO( K^{O(1)} )$$
and thus
$$ xy \cdot A \subset r \cdot 2A \cdot a - r \cdot 2A \cdot a + \bigO( K^{O(1)} ).$$
But by the covering lemma again we have
$$ A \cdot a \subset a \cdot A - a \cdot A + \bigO( K^{O(1)} )$$
and thus
$$ xy \cdot A \subset ra \cdot 4A - ra \cdot 4A + \bigO( K^{O(1)} )$$
and thus
$$ |xy \cdot A + ra \cdot A| \leq K^{O(1)} |5A-4A|.$$
By sum set estimates we thus have $|xy \cdot A + ra \cdot A| \leq K^{O(1)} |A|$, and so $xy \in S_{ra}$.  The claim (v) follows. 

Clearly (vi) and (viii) are immediate from \eqref{sr}, while (vii) follows from \eqref{Adiff}, so we now turn to (ix).  Let $r,s,t \in R^*$ be such that $t \in S_r \cap S_s$.  Then by (i) we have $|t \cdot A + r \cdot A|, |t \cdot A + s \cdot A| \leq K^{O(1)} |A|$, and hence by the covering lemma (Lemma \ref{ruzsa-cover}) we have $r \cdot A \subset t \cdot A - t \cdot A + \bigO(K^{O(1)})$ and $t \cdot A \subset s \cdot A - s \cdot A + \bigO(K^{O(1)})$, and thus $r \cdot A \subset s \cdot (2A-2A) + \bigO(K^{O(1)})$.  If $x \in S_s$, we thus have
$$ |x \cdot A + r \cdot A| \leq K^{O(1)} |x \cdot A + s \cdot A + s \cdot A - s \cdot A - s \cdot A|$$
and hence by (i) and sum set estimates (Lemma \ref{plunruz})
$$ |x \cdot A + r \cdot A| \leq K^{O(1)} |A|$$
and thus $x \in S_r$.  This shows that $S_s \subset S_r$; a similar argument gives $S_r \subset S_s$, and the claim follows.
\end{proof}

\section{Proofs of theorems}\label{main-sec}

With the key proposition in hand, we can now quickly conclude the main theorems of the paper.

\subsection{Proof of Theorem \ref{sum1-thm}}

By increasing $K$ if necessary we may take $K \geq 2$.  Let $A$ be as in the theorem.  By sumset estimates (Lemma \ref{plunruz}) we have $|A \cdot A - A \cdot A| \leq K^{O(1)} |A|$; by increasing $K$ if necessary we can assume $|A \cdot A - A \cdot A| \leq K |A|$.

Let $C_0$ be a sufficiently large absolute constant.  We may assume that $|(A-A) \backslash R^*| < K^{-C_0} |A|$, since the claim is trivial otherwise.  We set $S := S_1$, where $S_r$ is defined in \eqref{sr}.  By Proposition \ref{basic}(ii) we have $|S| \leq K^{O(1)} |A|$; by Proposition \ref{basic}(iv) $S$ is a ring.  Finally, since $|A + A \cdot A| \leq K|A|$, we have $A \subset S$ by \eqref{sr}, if $C_0$ is large enough.  The claim follows.

\subsection{Proof of Theorem \ref{sum2-thm}}

Again we can take $K \geq 2$.  Let $A$ be as in the theorem with an invertible element $a$, and let $C_0$ be a sufficiently large absolute constant. Again we may assume $|(A-A) \backslash R^*| \leq K^{-C_0} |A|$.

We set $S := S_1$ again.  Using Proposition \ref{basic}(ii) as before, we have $|S| \leq K^{O(1)} |A|$, and from Proposition \ref{basic}(iv) $S$ is a ring.  

By sum set estimates (Lemma \ref{plunruz}), we have $|A \cdot A + A \cdot A| \leq K^{O(1)} |A|$.  Since $a \in A$, we conclude that $|A \cdot A + a \cdot A| \leq K^{O(1)} |A|$; since $a$ is invertible; we thus have $|a^{-1} \cdot A \cdot A + A| \leq K^{O(1)} |A|$, and thus (if $C_0$ is large enough) $a^{-1} \cdot A \subset S$, and thus $A \subset a \cdot S$.

The only remaining task is to show that $a\cdot S=S\cdot a$.  Since $S$ is finite and $a$ is invertible, it suffices to show that $a\cdot S\cdot a^{-1} \subset S$.  Now let $x \in S$.  Since $A \subset a\cdot S$, and $S$ is a ring, we have
$$ a x a^{-1} \cdot A + A \subset a \cdot S \cdot a^{-1} \cdot a \cdot S + a \cdot S = a S$$
and thus
$$ |a x a^{-1} \cdot A + A| \leq |S| \leq K^{O(1)} |A|$$
and thus $axa^{-1} \in S$ by \eqref{sr}.  This shows that $a \cdot S \cdot a^{-1} \subset S$ as claimed.

\begin{remark} It is also possible to deduce Theorem \ref{sum2-thm} from Theorem \ref{sum1-thm} by applying the latter theorem to $a^{-1} \cdot A$, though one does need to invoke the sum set estimates (in a manner very similar to that performed above) to verify that $a^{-1} \cdot A$ obeys the required hypotheses for a suitable choice of $K$.  We leave the details as an exercise to the reader.
\end{remark}

\subsection{Proof of Theorem \ref{sum3-thm}}

Again we can take $K \geq 2$.  Let $A$ be as in that theorem.
Let $C_0 > 0$ be a large absolute constant; as before we may assume that $|(A-A) \backslash R^*| < K^{-C_0} |A|$.  In particular we can find an element $a \in (A-A) \cap R^*$.  By hypothesis and \eqref{sr} we see that $A \subset S_a$.

For each $n \geq 1$, let $G_n := \langle A^n \rangle$ be the additive group generated by the $n$-fold product set $A^n := A \cdot \ldots \cdot A$.  Observe that $G_n \cdot G_m \subset G_{n+m}$ for all $n,m \geq 1$.  Also, by Proposition \ref{basic}(iii) we have $G_1 \subset S_a$, so by Proposition \ref{basic}(v) and induction we have $G_n \subset S_{a^n}$ for all $n \geq 1$.  In particular, from Proposition \ref{basic}(ii) we have
$$ |G_n| \leq K^{O(1)} |A|$$
for all $n$.  On the other hand, since $a \cdot G_n \subset G_{n+1}$, we know that $|G_n|$ is a non-decreasing function of $n$.  Thus the quantity $N := \lim_{n \to \infty} |G_n|$ is finite (and takes values between $|A|$ and $K^{O(1)}|A|$, and furthermore we have $|G_n|=N$ for all sufficiently large $n$.  In particular, we see that the map $x \mapsto ax$ is a bijection from $G_n$ to $G_{n+1}$ for all sufficiently large $n$.

For every integer $d \in \Z$, define a \emph{partial dilation} $T = (T_n)_{n=n_0}^\infty$ of degree $d$ to be a sequence of additive homomorphisms $T_n: G_n \to G_{n+d}$ defined for all $n \geq n_0$ and some $n_0 \geq \max(1,1-d)$ such that $T_{n+m}(g_n g_m) = T_n(g_n) g_m$ for all $n \geq n_0$, $m \geq 1$, $g_n \in G_n$, and $g_m \in G_m$.  We say that a partial dilation is \emph{maximal} if one cannot decrease $n_0$ (and add additional homomorphisms $T_n$) without destroying the partial dilation property.  We observe some key examples of partial and maximal dilations:

\begin{enumerate}
\item[(a)] The identity sequence $(g_n \mapsto g_n)_{n=1}^\infty$, where $g_n$ denotes a variable in $G_n$, is a maximal dilation of degree $0$.
\item[(b)] For any $m \geq 1$ and $h \in G_m$, the sequence $(g_n \mapsto h g_n)_{n=1}^\infty$ is a maximal dilation of degree $m$.  In particular every element of $A$ gives rise to a maximal dilation of degree $1$ in this manner.  Also this construction associates $0$ to a maximal dilation of degree $m$ for each $m$.
\item[(c)] For all $n$ larger than a sufficiently large constant $n_0$, we have seen that the map $g_{n-1} \mapsto ag_{n-1}$ from $G_{n-1}$ to $G_n$, and thus has an inverse $a g_{n-1} \mapsto g_{n-1}$.  The sequence $(ag_{n-1} \mapsto g_{n-1})_{n=n_0}^\infty$ is thus a partial dilation of degree $-1$.
\item[(d)] If $(T_n)_{n=n_0}^\infty$ and $(T'_n)_{n=n'_0}^\infty$ are partial dilations of the same degree $d$, then $(T_n + T'_n)_{n = \max(n_0,n'_0)}^\infty$ is also a partial dilation of degree $d$, where of course $T_n + T'_n: G_n \to G_{n+d}$ is the map $x \mapsto T_n x + T'_n x$.  Similarly, $(-T_n)_{n=0}^\infty$ is also a partial dilation of degree $d$.
\item[(e)] If $(T_n)_{n=n_0}^\infty$ and $(T'_n)_{n=n'_0}^\infty$ are partial dilations of degree $d$ and $d'$ respectively, then $(T_{n+d'} \circ T'_n)_{n=n_1}^\infty$ is a partial dilation of degree $d+d'$ for $n_1 := \max( n'_0, n_0 - d' )$.
\end{enumerate}

Suppose that we have two partial dilations $(T_n)_{n=n_0}^\infty$ and $(T'_n)_{n=n'_0}^\infty$ which collide in the sense that $T_{n_1} \equiv T'_{n_1}$ for some $n_1 \geq \max(n_0,n'_0)$.  We then claim that in fact $T_n \equiv T'_n$ for all $n \geq \max(n_0,n'_0)$.  To see this, we first observe from the identities $T_n(x) a^m = T_n(x a^m)$, $T'_n(x) a^m = T'_n(x a^m)$ and the fact that $a^m$ is not a zero divisor for any $m \geq 1$ that it suffices to establish this for sufficiently large $n$.  But when $n$ is large enough, we have $|G_n| = |G_{n-n_1}|=N$ and so the map $x \mapsto a^{n_1} x$ is a bijection from $G_{n-n_1}$ to $G_n$.  Since $T_{n_1}(a^{n_1}) = T'_{n_1}(a^{n_1})$, we conclude that $T_n \equiv T'_n$ as required.  We conclude in particular that every partial dilation has a unique maximal extension.

For each $d \in \Z$, let $R_d$ denote the collection of all maximal dilations of degree $d$.  From the previous paragraph and (d) we can give $R_d$ the structure of an additive group.  Also, from the previous paragraph and (e) we can define a product operation $\cdot: R_d \times R_{d'} \to R_{d+d'}$ which is associative and is distributive over the additive structure, thus the direct sum $\bigoplus_{d \in \Z} R_d$ has the structure of a (graded) ring; in particular $R_0$ is itself a ring.  From (a) we have a multiplicative identity $1 \in R_1$ in this ring, while from (b) we can embed $G_n$ into $R_n$ in a manner preserving the additive and multiplicative structure.  (The embedding is injective, as can be seen by testing the resulting dilations on a non-zero-divisor such as $a$.)  In particular, the element $a$ can be identified with an element $t$ of $R_1$.  Finally from the previous paragraph and (c) we see that we can construct an inverse $t^{-1} \in R_{-1}$ which is both a right and left inverse of $t$.  In particular, $R_d = R_0 t^d = t^d R_0$ for all $d \in \Z$.  We thus see that the ring $\bigoplus_{d \in \Z} R_d$ is isomorphic to the the ring $R_0(t)_\phi$, where $\phi: R_0 \to R_0$ is the outer automorphism $\phi: r_0 \mapsto t r_0 t^{-1}$, and we can embed $G^n$ with elements of $R_0 t^n$ in this ring.

Finally, we observe that if we have two elements $(T_n)_{n=n_0}^\infty$ and $(T'_n)_{n=n'_0}^\infty$ of $R_0$ such that $T_n(a^n) = T'_n(a^n)$ for some $n \geq \max(n_0,n'_0)$, then (since the map $x \mapsto a^n x$ is a bijection from $G_m$ to $G_{m+n}$ for sufficiently large $m$) we conclude that $T_{m+n} \equiv T'_{m+n}$ for sufficiently large $m$, and thus the maximal extensions $(T_n)_{n=n_0}^\infty$ and $(T'_n)_{n=n'_0}^\infty$ must be identical.  On the other hand, $T_n(a^n)$ takes values in a set of size at most $N$.  We conclude that $R_0$ is finite with cardinality at most $N \leq K^{O(1)} |A|$.  (In fact, since $R_0$ contains $t^{-n} G_n$ for every $n \geq 1$, we see that $R_0$ has cardinality exactly $N$.)  The claim follows.

\begin{remark} In the case when the original ring $R$ is commutative, one can show that all maximal dilations commute with each other, so that $R_0$ is commutative and the twist map $\phi$ used to define $R_0(t)_\phi$ is in fact trivial; we leave the verification of this as an exercise to the reader.  Thus in this case one can embed $A$ into the commutative polynomial ring $R_0[t]$.
\end{remark}

\section{Special cases}\label{final-sec}

We now specialise the above theory to various special cases of interest.  Broadly speaking, our results are useful in any context in which the set of zero divisors is sparse and has an easily understood structure; this covers many (but definitely not all) cases of interest.

\subsection{Division rings}

The simplest application is to division rings, since in this case every non-zero element is invertible (and thus not a zero-divisor):

\begin{theorem}[Sum-product phenomenon in division rings]\label{division-thm}  Let $D$ be a division ring, and let $A$ be a finite non-empty subset of $D$ such that $|A+A|, |A \cdot A| \leq K|A|$ for some $K \geq 1$.  Then there exists a finite subring $S$ of $D$ of cardinality $|S| \ll K^{O(1)} |A|$ and an invertible element $a \in A$ such that $a \cdot S = S \cdot a$ and $A \subset a \cdot S + \bigO( K^{O(1)} )$.
\end{theorem}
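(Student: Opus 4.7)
The plan is to chain together the Katz--Tao lemma (Lemma \ref{ktl}), the homogeneous sum-product theorem with invertible element (Theorem \ref{sum2-thm}), and a final covering step using Lemma \ref{ruzsa-cover}. The crucial simplification afforded by a division ring $D$ is that $R^{*} = D\setminus\{0\}$, so $(A-A)\setminus R^{*}$ contains at most the single element $0$. Hence option (i) of either Lemma \ref{ktl} or Theorem \ref{sum2-thm} can only trigger when $|A| \ll K^{O(1)}$. In that degenerate range we can trivially take $S = \{0\}$ together with any nonzero (hence invertible) $a \in A$ and absorb $A$ into the $\bigO(K^{O(1)})$ error term, so we may assume throughout that $|A|$ is large enough to rule out option (i) in each subsequent application.

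First, after discarding $0$ from $A$ if present (which changes $|A|$ by at most one and leaves the sumset hypotheses unchanged up to constants), we have $A \subset R^{*}$, and Lemma \ref{ktl} produces a subset $A' \subset A$ with $|A'| \geq |A|/2K$ and $|A' \cdot A' - A' \cdot A'| \ll K^{O(1)} |A'|$. Since any nonzero element of $A'$ is a non-zero-divisor, left multiplication by it is injective, so $|A' \cdot A'| \geq |A'|$; this supplies the nondegeneracy hypothesis $|A' \cdot A'| \geq |A'|/K'$ needed for the next step. Applying Theorem \ref{sum2-thm} to $A'$ (with identity $1 \in D$), choosing $a$ to be any nonzero element of $A'$ as the invertible generator, produces a finite subring $S \subset D$ such that $A' \subset a \cdot S = S \cdot a$ and $|S| \ll K^{O(1)} |A'| \leq K^{O(1)} |A|$.

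Finally, we extend the containment from $A'$ to $A$. Since $|A + A'| \leq |A + A| \leq K |A| \ll K^{O(1)} |A'|$, the Ruzsa covering lemma yields $A \subset A' - A' + \bigO(K^{O(1)})$. Using that $S$ is a subring (hence closed under subtraction), we see that $A' - A' \subset a \cdot S - a \cdot S = a \cdot (S - S) \subset a \cdot S$, whence $A \subset a \cdot S + \bigO(K^{O(1)})$, as required. No serious obstacle arises: the argument is essentially a bookkeeping exercise on top of Lemma \ref{ktl} and Theorem \ref{sum2-thm}, with the only substantive observation being that in a division ring the zero-divisor alternatives of these results are vacuous outside a trivially small range of $|A|$, where the conclusion is itself trivial.
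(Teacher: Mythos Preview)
Your proof is correct and follows essentially the same route as the paper: reduce to the large-$|A|$ regime where the zero-divisor alternative is vacuous, apply Lemma~\ref{ktl} (after deleting $0$) to obtain $A'$, apply Theorem~\ref{sum2-thm} to $A'$ with any nonzero $a\in A'$, and finish with the Ruzsa covering lemma to pass from $A'$ back to $A$. Your write-up is in fact slightly more explicit than the paper's in verifying the hypothesis $|A'\cdot A'|\geq |A'|$ and in spelling out why $A'-A'\subset a\cdot S$.
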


For the case when $D$ is commutative (i.e. $D$ is a field, this is \cite[Theorem 2.55]{tao-vu}).  For finite-dimensional division rings over $\R$, such as the quaternions, this result is in \cite{chang}.  Note that this result also implies the original sum-product result of Erd\H{o}s and Szemer\'edi \cite{erdos-sumproduct} in $\Z$, as well as the sum-product result over finite fields in \cite{bkt}, \cite{konyagin}, \cite{konyagin2}.  One also recovers the sum-product estimates from \cite{wood} for integral domains of characteristic zero, since these domains can be embedded inside their field of fractions and contain no finite subrings.

\begin{proof}  We can assume that $|A| \geq C K^C$ for some large $C$, since the claim is trivial otherwise.  Applying Lemma \ref{ktl} (removing $0$ from $A$ if necessary) we can find a subset $A'$ of $A$ with $|A'| \gg K^{-O(1)} |A|$ such that $|A' \cdot A' - A' \cdot A'| \ll K^{O(1)} |A'|$ (since there are not enough zero divisors for option (i) to hold).  Applying Theorem \ref{sum2-thm} with $a$ equal to an arbitrary non-zero (and hence invertible) element of $A'$, we conclude that there exists a finite subring $S$ of $D$ of cardinality $|S| \ll K^{O(1)} |A'| \leq K^{O(1)} |A|$ with $a \cdot S = S \cdot a$ such that $A' \subset a \cdot S$.  Since $|A + A'| \leq |A + A| \leq K |A|$, the claim then follows from the covering lemma (Lemma \ref{ruzsa-cover}).
\end{proof}

\subsection{Products of division rings}

After division rings, the next easiest case to study is the product $R = D_1 \times \ldots \times D_k$ of a bounded number $k=O(1)$ of division rings (with the obvious pointwise ring operations), since in this case the zero divisors of $R$ are easily identified and have a clean and sparse structure, indeed we have $R^* = (D_1 \backslash \{0\}) \times \ldots \times (D_k \backslash \{0\})$.  A model example here is $\F_p \times \F_p$; the sum-product phenomenon in this ring was first studied by Bourgain \cite{borg-diffie-comp}, \cite{borg-diffie}, \cite{borg-mordell} in connection with exponential sums connected to the Diffie-Hellman cryptosystem, and also to certain exponential sums of Mordell type.

Our main result here is as follows.

\begin{theorem}[Sum-product phenomenon in products of division rings]\label{prod-thm} Let $D_1,\ldots,D_k$ be division rings, let $R := D_1 \times \ldots \times D_k$, and let $A \subset R$ be a non-empty finite set such that $|A+A|, |A \cdot A| \leq K|A|$ for some $K \geq 1$.  Then one of the following holds:
\begin{itemize}
\item[(i)] There exists $1 \leq j \leq k$ such that $|\pi_j(A)| \ll k K^{O(1)}$, where $\pi_j: R \to D_j$ is the projection homomorphism.
\item[(ii)] There exists a finite subring $S$ of $R$ of cardinality $|S| \ll K^{O(1)} |A|$ and an invertible element $a \in A$ such that $a \cdot S = S \cdot a$ and $A \subset a \cdot S + \bigO( K^{O(1)} )$.
\end{itemize}
\end{theorem}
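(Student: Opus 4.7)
The plan is to follow the template of the proof of Theorem \ref{division-thm}: apply Lemma \ref{ktl} to produce a subset $A'\subset A\cap R^*$ with $|A'\cdot A'-A'\cdot A'|\ll K^{O(1)}|A'|$, then invoke Theorem \ref{sum2-thm} on $A'$ with an invertible element, and finally extend the resulting subring structure back to $A$ via Ruzsa covering (Lemma \ref{ruzsa-cover}). The new work is translating the ``many zero-divisor differences'' alternatives of these results---vacuous in division rings---into option (i) here, using that $R\setminus R^* = \bigcup_{j=1}^{k} Z_j$ where $Z_j:=\ker\pi_j$.

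The trivial case $|A|\le CkK^C$ gives $|\pi_j(A)|\le|A|\ll kK^{O(1)}$, so option (i) holds automatically; I may therefore assume $|A|$ is much larger. Set $A^*:=A\cap R^*$. If $|A^*|<|A|/(2k)$, then by pigeonhole some $A\cap Z_j$ has density $\gg 1/k$ in $A$, and this subset inherits sum-product ratios of order $kK$ in the subring $Z_j\cong\prod_{i\ne j}D_i$; I would close this subcase by induction on $k$, the base case $k=1$ being Theorem \ref{division-thm}. Otherwise $|A^*|\gg|A|/k$ and Lemma \ref{ktl} applied to $A^*$ returns either many zero-divisor differences (handled below) or a subset $A'$ of density $\gg K^{-O(1)}$ in $A$ with $|A'\cdot A'-A'\cdot A'|\ll K^{O(1)}|A'|$.

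Applying Theorem \ref{sum2-thm} to $A'$ with any $a\in A'\subset R^*$ as invertible element yields either many zero-divisor differences in $A'-A'$, or a finite subring $S\subset R$ with $A'\subset a\cdot S=S\cdot a$ and $|S|\ll K^{O(1)}|A|$. In the latter case, $|A+A'|\le K|A|\ll K^{O(1)}|A'|$ and Lemma \ref{ruzsa-cover} give $A\subset A'-A'+\bigO(K^{O(1)})$; since $a\cdot S$ is an additive subgroup, $A'-A'\subset a\cdot S-a\cdot S=a\cdot S$, so $A\subset a\cdot S+\bigO(K^{O(1)})$, which is option (ii).

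The main obstacle is the remaining scenario, where either Lemma \ref{ktl} or Theorem \ref{sum2-thm} returns many zero-divisors: pigeonhole on $\bigcup_j Z_j$ produces an index $j$ with $|(A'-A')\cap Z_j|\gg K^{-O(1)}|A'|/k$, and I must deduce $|\pi_j(A)|\ll kK^{O(1)}$. The set $P:=(A'-A')\cap Z_j$ lies in the subring $Z_j$ and, via Pl\"unnecke--Ruzsa applied to $|A'+A'|\ll K|A'|$ and $|A'\cdot A'-A'\cdot A'|\ll K^{O(1)}|A'|$, satisfies $|P+P|, |P\cdot P|\ll K^{O(1)}|A'|$, hence has sum-product ratio $\ll kK^{O(1)}$ in $Z_j$. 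Applying Theorem \ref{prod-thm} inductively (in $k-1$ factors) to $P$ then yields either a small projection $|\pi_i(P)|$ for some $i\ne j$---which should transfer to $|\pi_i(A)|\ll kK^{O(1)}$ via the fibration $A=\bigsqcup_{y\in\pi_j(A)}(A\cap\pi_j^{-1}(y))$ together with the multiplicative invertibility of $A'\subset R^*$---or a dilate-of-subring structure for $P$ that, combined with the density of $P$ in $Z_j$, forces $|\pi_j(A)|$ itself to be bounded. Making this last translation rigorous is the subtlest step.
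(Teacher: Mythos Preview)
Your skeleton (Lemma \ref{ktl}, then Theorem \ref{sum2-thm}, then covering) matches the paper's, and you have correctly isolated the new work: turning ``many zero-divisors in $A-A$'' into option (i). But your proposed induction on $k$ is unnecessary and, as you yourself flag, has an unclosed step.

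The paper handles all the zero-divisor alternatives at once, up front, with a single fibre count. If $(A-A)\setminus R^*$ has size $\gg K^{-O(1)}|A|$, pigeonhole gives $j$ with $A_j:=(A-A)\cap\ker\pi_j$ of size $\gg K^{-O(1)}|A|/k$. Since $A_j\subset\ker\pi_j$, translates $a+A_j$ for $a\in A$ lying in distinct $\pi_j$-fibres are pairwise disjoint, so
\[
|\pi_j(A)|\cdot|A_j|\ \le\ |A+A_j|\ \le\ |A+(A-A)|\ \ll\ K^{O(1)}|A|
\]
by Lemma \ref{plunruz}, whence $|\pi_j(A)|\ll kK^{O(1)}$: option (i), with no recursion and no structural analysis of $A_j$. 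Once we assume we are not in (i), the same reasoning (together with $|A\cap\ker\pi_j|\le|A_j|$, by subtracting a fixed element) shows $A$ has few zero-divisors, so one passes to $A\cap R^*$ directly; and since $A'-A'\subset A-A$, the zero-divisor alternatives in Lemma \ref{ktl} and Theorem \ref{sum2-thm} are then automatically excluded as well.

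Your inductive route, even if it could be closed, would degrade the conclusion: each step replaces $K$ by something of order $kK^{O(1)}$, so after $k$ steps the exponent on $K$ depends on $k$, which is weaker than the stated $kK^{O(1)}$ with absolute exponent. And the transfer you sketch from small $|\pi_i(P)|$ to small $|\pi_i(A)|$ is not automatic---$P\subset A'-A'$ could concentrate on few $\pi_i$-fibres without $A$ doing so---and any rigorous version seems to require precisely the fibre-count inequality above, at which point the induction is redundant.
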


This result implies the sum-product theorems in \cite{borg-diffie-comp}, \cite{borg-diffie}, \cite{borg-mordell} for the ring $\F_p \times \F_p$ as a special case, after noting that the only non-trivial proper subrings of $\F_p \times \F_p$ are of the form $\F_p \times \{0\}$, $\{0\} \times \F_p$, or $\{ (x,ax): x \in \F_p \}$ for some $a \in \F_p^*$.  Note that the results cited treat the case when $K \geq p^\eps$ for $\eps > 0$, but Theorem \ref{prod-thm} can also be applied for much smaller values of $K$ (though this is not the case of interest for exponential sum applications).  It also largely recovers the sum-product estimates in \cite{bourgain-chang-zq}, \cite{bourgain-survey}.  

For applications to exponential sums, the above theorem is only useful when $k$ is small; when $k$ is large, the conclusion (i) becomes weak unless $K$ is small compared to $\min_{1 \leq j \leq k} |D_j|$, which is not the case of interest in these applications.  In particular, we do \emph{not} recover the sum-product theorem for $\Z_p^k$ for fixed $p$ and large $k$ that appears in \cite{borg-finite-exp}, \cite{borg-sump-exp}, \cite{borg-exponential}. 

\begin{proof}  Suppose first that $A-A$ contains $\gg K^{-O(1)} |A|$ zero-divisors, then by the pigeonhole principle there exists $1 \leq j \leq k$ such that the set $A_j := (A-A) \cap \pi_j^{-1}(\{0\})$ has cardinality $\gg K^{-O(1)} |A| / k$.  Observe that $|A + A_j| \geq |\pi_j(A)| |A_j|$ due to the disjointness of the fibres $\pi_j^{-1}(\{a_j\})$ for $a_j \in D_j$.  On the other hand, by sum set estimates (Lemma \ref{plunruz}) we have
$$ |A_j + A| \leq |A-A+A| \ll K^{O(1)} |A|$$
and thus
$$ |\pi_j(A)| \ll k K^{O(1)}$$
and then we are in case (i).

We may thus assume instead that $A-A$ does not contain so many zero-divisors.  The above argument also shows that we may assume that $|A_j| \leq c K^{-C} |A|/k$ for any fixed $c,C > 0$ and all $j$, which implies that $|A \cap \pi_j^{-1}(\{0\})| \leq c K^{-C} |A|/k$  In particular, at most half of the elements of $A$ are zero-divisors.  By removing such elements (and increasing $K$ slightly) we may thus assume that $A \subset R^*$.  We can now apply Lemma \ref{ktl} to find a subset $A' \subset A$ with $|A'| \gg K^{-O(1)} |A|$ such that $|A' \cdot A' - A' \cdot A'| \ll K^{O(1)} |A'|$.  Applying Theorem \ref{sum2-thm} (and noting that $A'-A'$ is a subset of $A-A$ and thus cannot have $\gg K^{-O(1)} |A'|$ zero-divisors) we conclude that there exists a finite subring $S$ of $R$ of cardinality $|S| \ll K^{O(1)} |A|$ and $a \in A'$ such that $a \cdot S = S \cdot a$ and $A' \subset a \cdot S$.  The claim then follows from the covering lemma as in the proof of Theorem \ref{division-thm}.
\end{proof}

\subsection{Cyclic rings of low prime power order}

Another interesting case are the cyclic $\Z/p^k\Z$ of prime power order.  We are not able to obtain satisfactory results in the important case when $k$ is large (in particular, we do not recover the results in \cite{borg-sump-exp}, \cite{borg-exponential}), but we can obtain the following result which is efficient in the regime $k=O(1)$.

\begin{theorem}[Sum-product phenomenon in cyclic rings]\label{cyclic-thm} Let $p$ be a prime, let $R := \Z/p^{k} \Z$ for some $k \geq 1$, and let $A \subset R$ be a non-empty finite set such that $|A+A|, |A \cdot A| \leq K|A|$ for some $K \geq 1$.  Then one of the following holds:
\begin{itemize}
\item[(i)] We have $A \subset p \cdot R + \bigO( K^{O(1)} )$;
\item[(ii)] We have $|A| \gg K^{-O(1)} |R|$.
\end{itemize}
\end{theorem}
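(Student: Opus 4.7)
My plan is to apply Theorem \ref{sum2-thm} after first reducing to the non-zero-divisor situation, and then exploit the fact that the subrings of $\Z/p^k\Z$ are precisely the ideals $p^j \Z/p^k\Z$ for $j = 0, 1, \ldots, k$. The argument splits into two regimes according to the size of $(A - A) \cap pR$, noting that $pR$ is exactly the set of zero-divisors in $R$.

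In the first regime, suppose $|(A - A) \cap pR| \geq K^{-C} |A|$ for a sufficiently large constant $C$. Setting $D := (A - A) \cap pR$, we have $|(A - A) + D| \leq |2A - 2A| \leq K^{O(1)} |A|$ by Lemma \ref{plunruz}, so the Ruzsa covering lemma (Lemma \ref{ruzsa-cover}) yields $A - A \subset D - D + \bigO(K^{O(1)})$. Since $pR$ is an additive subgroup containing $D$, we have $D - D \subset pR$, and hence $A - A \subset pR + \bigO(K^{O(1)})$. Shifting by any $a_0 \in A$ gives $A \subset pR + \bigO(K^{O(1)})$, which is case (i).

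In the second regime, $|(A - A) \cap pR| < K^{-C} |A|$. For any $a_0 \in A \cap pR$ one has $(A \cap pR) - a_0 \subset (A - A) \cap pR$, so $|A \cap pR| < K^{-C} |A|$, and consequently $A' := A \cap R^*$ satisfies $|A'| \geq |A|/2$ once $C$ is large. Applying Lemma \ref{ktl} to $A'$, whose alternative (i) is precluded by our assumption, produces a subset $A'' \subset A'$ of size $\gg K^{-O(1)} |A|$ with $|A'' \cdot A'' - A'' \cdot A''| \leq K^{O(1)} |A''|$. Since $A'' \subset R^*$, it contains invertible elements, so Theorem \ref{sum2-thm} applies (its alternative (i) again precluded by the assumption), yielding a finite subring $S \subset R$ with $|S| \leq K^{O(1)} |A|$ and an invertible $a \in A''$ for which $A'' \subset a \cdot S$.

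Finally one classifies $S$: the subrings of $\Z/p^k\Z$ are exactly the $p^j R$ for $j \in \{0, 1, \ldots, k\}$. If $j = 0$ then $|R| = |S| \leq K^{O(1)} |A|$, giving case (ii). If $j \geq 1$ then $a \cdot S = p^j (aR) = p^j R \subset pR$ (using $aR = R$ for invertible $a$), so $A'' \subset pR$; a final Ruzsa covering step using $|A + A''| \leq K|A|$ and $|A''| \gg K^{-O(1)} |A|$ promotes this to $A \subset A'' - A'' + \bigO(K^{O(1)}) \subset pR + \bigO(K^{O(1)})$, which is case (i). I expect the most delicate point to be the first regime — verifying that the Ruzsa covering argument converting ``many zero-divisor differences'' into ``$A$ lies in a few cosets of $pR$'' comes out with the polynomial bounds we want — but it works out cleanly thanks to the additive-subgroup structure of $pR$; the rest is careful bookkeeping of constants to ensure that the alternative (i) in Lemma \ref{ktl} and Theorem \ref{sum2-thm} is consistently excluded.
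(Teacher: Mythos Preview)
Your proof is correct and follows essentially the same route as the paper's: split on the size of $(A-A)\cap pR$, use the covering lemma in the first regime, and apply Lemma~\ref{ktl} followed by Theorem~\ref{sum2-thm} in the second. The paper's endgame is slightly slicker --- since $a\in A''\subset a\cdot S$ with $a$ invertible forces $1\in S$, the only subring possible is $S=R$, so your $j\geq 1$ case (which is in fact vacuous, as $A''\subset R^*$ is disjoint from $pR$) and its accompanying covering step never actually arise.
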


\begin{proof}  By using the covering lemma (Lemma \ref{ruzsa-cover}), we see that if the set $A_1 := (A-A) \cap p \cdot R$ has cardinality $\gg K^{-O(1)} |A|$, then $A \subset p \cdot R + \bigO( K^{O(1)} )$. Thus we may assume that $|A_1| \leq c K^{-C} |A|$ for some suitable $c, C > 0$.  In particular this implies that at most half the elements of $A$ lie in $p \cdot R$, so by removing those elements and increasing $K$ slightly we may assume that all elements of $A$ are invertible.  We then apply Lemma \ref{ktl} to find a subset $A'$ of $A$ of cardinality $\gg K^{-O(1)} |A|$ such that $|A' \cdot A' - A' \cdot A'| \ll K^{O(1)} |A'|$; by theorem \ref{sum2-thm}, we can thus find $a \in A'$ and a subring $S$ of $R$ such that $|S| \ll K^{O(1)}|A|$ and $A' \subset a \cdot S$.  But the only subring of the cyclic ring $R$ that contains an invertible element is the full ring $R$, and so we are in case (ii) as desired.
\end{proof}

\subsection{Algebras}

Now we consider the case of a finite-dimensional algebra $R$ over a field $F$ (which may be finite or infinite), such as a matrix algebra $M_d(F)$.  As before, our results are only useful in the regime when the dimension $d$ is relatively low and $F$ is large.  More precisely, our result is as follows.

\begin{theorem}[Sum-product phenomenon in algebras]\label{algebra}  Let $R$ be a $d$-dimensional algebra over some field $F$, and let $A \subset R$ be a non-empty set with $|A+A|, |A \cdot A| \leq K|A|$ for some $K \geq 1$.  Suppose that the characteristic of $F$ is either zero, or is sufficiently large depending on $d$.  Then one of the following holds:
\begin{itemize}
\item[(i)] There exists a linear subspace $V \subset R$ such that every element of $V$ is a zero divisor, and such that $|A \cap (x+V)| \gg_d K^{-O_d(1)} |A|$ for some $x \in R$.  (The subscripting by $d$ means that the implied constants are allowed to depend on $d$.)
\item[(ii)] There exists subset $A'$ of $A$ of cardinality $\gg K^{-O(1)} |A|$, a finite ring $R_0$, an automorphism $\phi: R_0 \to R_0$, and embeddings $\iota_n: \langle (A')^n \rangle \to R_0 \cdot t^n \subset R_0[t]_\phi$ for $n \geq 0$ which are additive homomorphisms, and such that $\iota_{n+m}(a_n a_m) = \iota_n(a_n) \iota_m(a_m)$ for all $n,m \geq 0$ and $a_n \in \langle (A')^n \rangle$, $a_m \in \langle (A')^m \rangle$.
\end{itemize}
\end{theorem}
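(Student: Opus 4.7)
The plan is to reduce to Theorem \ref{sum3-thm} via the Katz--Tao lemma, and then to convert the ``many zero divisors'' alternative into conclusion (i) by exploiting the fact that in a $d$-dimensional $F$-algebra the zero-divisor locus is a degree-$d$ algebraic hypersurface.

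First I would apply Lemma \ref{plunruz} to bound the iterated sum- and product-sets by $K^{O(1)}|A|$. If more than half of $A$ lies in the zero-divisor set $Z := R\backslash R^*$, this feeds directly into the algebraic analysis below; otherwise pass to $A \cap R^*$ (adjusting $K$) and apply Lemma \ref{ktl} to extract a subset $A' \subseteq A \cap R^*$ with $|A'| \gg K^{-O(1)}|A|$ and $|A' \cdot A' - A' \cdot A'| \ll K^{O(1)}|A'|$. Since $A' \subseteq R^*$ we automatically have $|A' \cdot A'| \ge |A'|$, so Theorem \ref{sum3-thm} applies to $A'$: conclusion (ii) of that theorem is precisely conclusion (ii) here, so it remains to handle the alternative $|(A'-A')\backslash R^*| \gg K^{-O(1)}|A'|$.

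The key algebraic input is that in $R$, the set $Z$ is the vanishing locus of $N(r) := \det_F(L_r)$, where $L_r: R \to R$ is left multiplication by $r$; identifying $R \cong F^d$, this is a polynomial of total degree $d$. Thus $\gg K^{-O(1)}|A'|$ elements of $A' - A'$ lie on the degree-$d$ hypersurface $\{N=0\}$, and one also has $|A'-A'| \ll K^{O(1)}|A'|$. To produce the required linear subspace $V \subseteq Z$, I would first use pigeonhole to find $a_0 \in A'$ such that the set $B := \{a - a_0 : a \in A',\ a - a_0 \in Z\}$ has size $\gg K^{-O(1)}|A'|$, so that $a_0 + B \subseteq A'$ lies in the affine slice $a_0 + \mathrm{Span}_F(B)$ whose linear part is $W := \mathrm{Span}_F(B)$. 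If $N|_W \equiv 0$, then $W \subseteq Z$ and one takes $V := W$, giving $|A \cap (a_0 + V)| \ge |B| \gg_d K^{-O_d(1)}|A|$. Otherwise $N|_W$ is a nonzero polynomial of degree $\le d$; invoking Schwartz--Zippel (available thanks to $\mathrm{char}(F) = 0$ or $\gg d$) and pigeonholing over the irreducible or linear components of $\{N|_W = 0\}$, one isolates an affine hyperplane in $W$ containing $\gg d^{-1}$ of $B$, and iterates the construction in lower dimension. Since $\dim W \le d$, the iteration terminates within $d$ steps.

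The main obstacle is the recursion step, specifically extracting a genuinely \emph{linear} subspace of $Z$ (rather than a mere subvariety) that captures a large fraction of $A'$; this requires a careful combination of Schwartz--Zippel-type bounds for the polynomial $N$ and its restrictions with the additive combinatorics of small-doubling sets, and the large-characteristic hypothesis is exactly what makes these polynomial-method estimates go through cleanly without degeneracies. The overall flavour is that of the ``good element'' analysis in \cite[Proposition 3.3]{bkt}.
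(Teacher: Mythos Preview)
Your reduction to Theorem~\ref{sum3-thm} via Lemma~\ref{ktl} matches the paper exactly. The gap is entirely in the ``many zero-divisors'' branch, and it is twofold. First, the pigeonhole for $a_0$ fails quantitatively: from $|(A'-A')\cap Z|\gg K^{-O(1)}|A'|$ one only obtains $\gg K^{-O(1)}|A'|$ \emph{pairs} $(a,a_0)$ with $a-a_0\in Z$ (at least one per difference), so averaging over $a_0\in A'$ yields merely $|B|\gg K^{-O(1)}$, not $|B|\gg K^{-O(1)}|A'|$. Second, and more fundamentally, the recursion does not work: a nonzero polynomial $N|_W$ of degree $\le d$ need not have any linear factor (think of an irreducible quadric), so ``pigeonholing over the irreducible or linear components'' can simply return the same irreducible non-linear hypersurface and make no progress. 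Schwartz--Zippel is an \emph{upper} bound on zero sets and cannot manufacture hyperplanes; the small doubling of $B$ must be fed into the recursion, and in your sketch it never is.

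The paper handles this with a separate Linearisation Lemma (Lemma~\ref{linear}): if a set with doubling $\le K$ lies inside an algebraic set $W$ of bounded complexity, then some affine subspace $U\subset W$ captures a $K^{-O_{d,k,D}(1)}$-fraction of it. The proof is a double induction on $\dim V$ and $\dim W$: either $W$ is translation-invariant in some direction $v$ (project out $v$ and use the outer induction), or $W\cap(W+v)$ has strictly smaller dimension for every $v\neq 0$, in which case a Cauchy--Schwarz energy count (this is precisely where $|A+A|\le K|A|$ is spent) produces $v\in A-A$ with $|A\cap(A+v)|\gg|A|/K$, and one recurses into $W\cap(W+v)$. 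The paper applies this lemma directly to $(A-A)\cap Z$ (which inherits small doubling from $A$ via Lemma~\ref{plunruz}), extracts a linear $V\subset Z$ meeting $A-A$ in $\gg_d K^{-O_d(1)}|A|$ points, and finishes with the covering lemma (Lemma~\ref{ruzsa-cover}) to get $A\subset V+\bigO_d(K^{O_d(1)})$, which is conclusion~(i).
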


The author conjectures that the hypothesis on the characteristic can be omitted, but does not know how to prove this.

Before we show Theorem \ref{algebra}, we first need a result combining algebraic geometry with additive combinatorics, which may be of some independent interest.

\begin{lemma}[Linearisation]\label{linear}  Let $V$ be a $d$-dimensional vector space over a field $F$, let $W$ be an algebraic set $\{ x \in V: P_1(x) = \ldots = P_k(x)=0 \}$ cut out by $k$ polynomials $P_1,\ldots,P_k: V \to F$ of degree at most $D$, and let $A \subset W$ be a non-empty set such that $|A + A| \leq K|A|$.   Suppose that the characteristic of $F$ is either zero, or is sufficiently large depending on $d,k,D$. Then there exists an affine space $U$ contained in $W$ such that $|A \cap U| \gg_{d,k,D} K^{-O_{d,k,D}(1)} |A|$.
\end{lemma}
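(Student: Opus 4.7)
I plan to prove the lemma by induction on the ambient dimension $d$. The base case $d=0$ is trivial: $V$ is a point, $|A| \leq 1$, and $U = V$ suffices. For the inductive step, my strategy is to find a \emph{nontrivial} affine polynomial $L$ vanishing on a subset $A' \subset A$ of size $\gg K^{-O_{d,k,D}(1)}|A|$; then $A'$ lies in the hyperplane $H = \{L = 0\}$ of dimension $d-1$, inherits doubling constant $K^{O_{d,k,D}(1)}$ from $A$, and is contained in $W \cap H$, which is cut out inside $H$ by the restrictions $P_j|_H$ of degree at most $D$. Applying the inductive hypothesis in dimension $d-1$ then produces an affine subspace $U \subset W \cap H \subset W$ with $|A \cap U| \ge |A' \cap U| \gg K^{-O_{d,k,D}(1)}|A|$, as required. (The case where some $P_j$ is already a nonzero affine polynomial is handled immediately by taking $L = P_j$ and $A' = A$.)

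To produce $L$ in the remaining case where every nonzero $P_j$ has degree at least $2$, I use iterated discrete differentiation. Fix any $P = P_1$ of degree exactly $D \geq 2$. The hypothesis $|A+A| \leq K|A|$ together with the Pl\"unnecke--Ruzsa inequality (Lemma~\ref{plunruz}) gives $\sum_{h} r(h)^2 \geq |A|^4/|A-A| \gg K^{-O(1)}|A|^3$ where $r(h) := |A \cap (A-h)|$, hence some $h_0 \in A - A$ with $r(h_0) \gg K^{-O(1)}|A|$. The discrete derivative $\partial_{h_0} P(x) := P(x+h_0) - P(x)$ has degree at most $D-1$ and vanishes on $A^{(1)} := A \cap (A - h_0)$; since $A^{(1)}$ is a dense subset of $A$ it still has doubling constant $K^{O(1)}$, so the argument iterates. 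I inductively pick $h_1, \ldots, h_{D-2}$ and sets $A^{(j+1)} := A^{(j)} \cap (A^{(j)} - h_{j+1})$ of size $\gg K^{-O_j(1)}|A|$, and set $L := \partial_{h_{D-2}} \circ \cdots \circ \partial_{h_0} P$. Then $L$ has degree at most $1$ and vanishes on $A^{(D-1)}$, which will play the role of $A'$.

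The main obstacle is ensuring that $L$ is not identically zero. Writing $P^{\mathrm{top}}$ for the top-degree homogeneous component of $P$ and $B$ for the associated symmetric $D$-multilinear form, a direct expansion identifies the linear part of $L$ with $D!\cdot B(h_0, \ldots, h_{D-2}, x)$ up to a constant. In characteristic $0$ or exceeding $D$, the polarization identity guarantees $B \not\equiv 0$, so the condition $B(h_0, \ldots, h_{D-2}, \cdot) \equiv 0$ cuts out a proper algebraic subvariety $Y \subset V^{D-1}$ of degree $O_{d,D}(1)$. By Schwartz--Zippel, $Y$ meets $(A-A)^{D-1}$ in at most $O_{d,D}(K^{D-2}|A|^{D-2})$ tuples, while the supply of ``good'' tuples $(h_0,\ldots,h_{D-2})$ with each $r(h_j) \gg K^{-O(1)}|A|$ has cardinality $\gg (K^{-O(1)}|A|)^{D-1}$; provided $|A| \geq C_{d,k,D} K^{O_D(1)}$ a good tuple outside $Y$ exists, making $L$ genuinely affine. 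The small-$|A|$ regime is handled trivially by taking $U = \{a_0\}$ for any $a_0 \in A$. This is exactly where the hypothesis on the characteristic enters: in small characteristic $p$, identities like $\partial_h \partial_{h'}(x^p)\equiv 0$ cause iterated differences of high-degree monomials to degenerate, breaking the polarization argument above.
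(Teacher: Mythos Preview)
Your strategy---iteratively differentiate a defining polynomial to manufacture an affine equation on a dense subset, then induct on $d$---is quite different from the paper's. The paper first passes to irreducible components of $W$ (bounding their number and complexity via a result from SGA6) and then runs a double induction on $d$ and on $\dim W$: either $W$ is invariant under translation by some $v\neq 0$ (project to $V/Fv$ and drop $d$), or, by irreducibility, $W\cap(W+v)$ has strictly smaller dimension for every nonzero $v$, so a single popular difference $v$ together with $A':=A\cap(A+v)\subset W\cap(W+v)$ drops $\dim W$. Your route, if it worked, would be more elementary in that it avoids irreducible decomposition and the SGA6 input entirely.

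However, the Schwartz--Zippel step is a genuine gap. The claimed bound $|Y\cap(A-A)^{D-1}|=O_{d,D}(|A-A|^{D-2})$ is unjustified: Schwartz--Zippel applies to grids $S^n\subset F^n$, whereas here $A-A$ is an arbitrary subset of $F^d$ and the product is over \emph{blocks}, not coordinates. Concretely, take $d=2$, $D=2$, $P(x_1,x_2)=x_1^2$; then $W=\{x_1=0\}$, the polar form is $B(h,x)=h_1x_1$, and for any $A\subset W$ one has $A-A\subset\{0\}\times F$, whence $B(h,\cdot)\equiv 0$ for \emph{every} $h\in A-A$, i.e.\ $Y\supset A-A$, destroying the bound entirely. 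There is also a secondary mismatch: the tuples you count (those with each $r(h_j)$ individually large) form a \emph{superset} of the tuples for which the iterated intersection $A^{(D-1)}$ is large, which is the wrong containment for a comparison against $Y$; even granting your bound on $Y$, you would only produce a tuple with $L\not\equiv 0$, not one for which $L$ also vanishes on a dense subset of $A$. A workable repair replaces the global count by a dichotomy at each step $j$: letting $V_j$ be the (proper) kernel in the next slot of the partially specialised form $B(h_0,\ldots,h_{j-1},\cdot,\ldots,\cdot)$, either some coset of $V_j$ captures $\gg K^{-O(1)}|A^{(j)}|$ points of $A^{(j)}$ (reduce $d$ immediately), or the additive energy of $A^{(j)}$ supplies a popular $h_j\notin V_j$, keeping the form nondegenerate for the next step.
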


\begin{proof}  Without loss of generality we may take $F$ to be algebraically closed, since the general case follows by replacing $F$ with its algebraic closure $\overline{F}$, and $V$ with the tensor product $V \otimes_F \overline{F}$ (and noting that the restriction of an affine space over $\overline{F}$ in $V \otimes_F \overline{F}$ to $V$ is an affine space over $F$).  

Since any algebraic set cut out by $k$ polynomials of degree at most $D$ can be expressed\footnote{This is Corollary 6.11 of Kleiman's article~\cite{klei:sga6} in SGA6} as the union of $O_{d,k,D}(1)$ algebraic varieties (i.e. irreducible algebraic sets) cut out by $O_{d,k,D}(1)$ polynomials of degree at most $O_{d,k,D}(1)$, it suffices (by the pigeonhole principle) to verify the claim for algebraic varieties.

We induct on the dimension $d$; since the claim is trivial for $d=0$, we assume $d \geq 1$ and that the claim has been proven for smaller $d$.  Once $d$ is fixed, we perform a secondary induction on the dimension $\dim(W)$ of the algebraic variety.  The claim is trivial if $\dim(W)=0$ (since $|W| = O_{d,k,D}(1)$ in this case), so assume $\dim(W) \geq 1$ and that the claim has already been proven for smaller dimensional varieties in the same vector space $V$.

Suppose that there is a non-zero element $v$ of $V$ such that $W+v = W$.  Let $I(W)$ be the ideal of polynomials that vanish on $W$, then this ideal is invariant under translation by $v$.  In particular, if $P$ is a polynomial in $I(W)$, then the derivative $P(\cdot+v) - P(\cdot)$ also lies in $I(W)$.  Iterating this (starting with one of the generators of $W$) we conclude that $I(W)$ contains at least one non-trivial polynomial $P$ of degree $O_{d,k,D}(1)$ that is invariant under $v$; from the assumption on the characteristic of $F$, we conclude that $P$ factors through the projection map $\pi: V \to V/F \cdot v$.  As a consequence, we see that the projection $\pi(W)$ is a non-trivial algebraic set in the vector space $V/F \cdot v$, which is cut out by $O_{d,k,D}(1)$ polynomials of degree at most $O_{d,k,D}(1)$.  

Let $M$ be the maximum value of $|A \cap \pi^{-1}(\{x\})|$ as $x$ ranges over $V/F \cdot v$.  Observe that $|A+A| \geq |\pi(A)| M$; since $|A+A| \leq K|A|$, we conclude that $\sum_{x \in \pi(A)} |A \cap \pi^{-1}(\{x\})| \leq M |\pi(A)|/K$.  We conclude that there exists a subset $\tilde A \subset \pi(A)$ of cardinality $|\tilde A| \geq |\pi(A)|/2K$ such that $|A \cap \pi^{-1}(\{x\})| \geq M/2K$ for all $x \in \tilde A$.  If we now decompose $\pi(W)$ into algebraic varieties and use the pigeonhole principle as before, and apply the outer induction hypothesis, we conclude that there exists an affine space $\tilde U$ contained in $\pi(W)$ such that $|\tilde A \cap \tilde U| \gg_{d,k,D} K^{-O_{d,k,D}(1)} |\tilde A|$.  If we let $U := \pi^{-1}(\tilde U)$ we see from construction that $U$ is an affine space contained in $W$ and
\begin{align*} |A \cap U| &\geq |\tilde A \cap \tilde U| M/2K \\
&\gg_{d,k,D} K^{-O_{d,k,D}(1)} |\tilde A| M \\
&\gg_{d,k,D} K^{-O_{d,k,D}(1)} |\pi(A)| M \\
&\gg_{d,k,D} K^{-O_{d,k,D}(1)} |A|
\end{align*}
thus closing the induction in this case.  Thus we may assume without loss of generality that $W+v \neq W$ for all non-zero $v$; since $W$ is irreducible, this implies that $W+v \cap W$ is an algebraic set consisting of varieties of strictly smaller dimension than $W$.

Using the identities
$$ \sum_{a,a' \in A} |(a + A) \cap (a' + A)| = \sum_{x \in A+A} |\{ (a,b) \in A: a+b = x \}|^2$$
and
$$ \sum_{x \in A+A} |\{ (a,b) \in A: a+b = x \}| = |A|^2$$
and Cauchy-Schwarz, we have
$$ \sum_{a,a' \in A} |(a + A) \cap (a' + A)| \geq |A|^4/|A+A| \geq |A|^3/K.$$
The contribution of the diagonal term $a=a'$ is negligible unless $|A| = O(K)$, in which case the claim is trivial.  In all other cases, we can apply the pigeonhole principle and conclude that there exist distinct $a,a' \in A$ such that
$$ |(a + A) \cap (a' + A)| \gg |A|/K.$$
If we set $v := a'-a$ and $A' := A \cap (A+v)$, we thus conclude that $|A'| \gg |A|/K$ and 
$$|A'+A'| \leq|A+A| \leq K|A| \ll K^2 |A'|.$$
Also, $A'$ is contained in $W \cap (W+v)$ and is thus contained in $O_{d,k,D}(1)$ varieties of dimension strictly less than $W$, and cut out by $O_{d,k,D}(1)$ polynomials of degree $O_{d,k,D}(1)$.  Applying the inner induction hypothesis we may thus find an affine subspace $U$ in $W \cap (W+v)$ (and hence in $W$) such that $|A' \cap U| \gg_{d,k,D} K^{-O_{d,k,D}(1)} |A'| \gg_{d,k,D} K^{-O_{d,k,D}(1)} |A|$, thus closing the induction in this case also.
\end{proof}

Now we can prove Theorem \ref{algebra}.

\begin{proof}[Proof of Theorem \ref{algebra}]  
We can assume that $|A| \geq C K^C$ for some large $C$, since the claim is trivial otherwise. By writing the product operation on $R$ in coordinates over $F$, we observe that the set of non-zero-divisors in $R$ is an algebraic set cut out by $O_d(1)$ polynomials of degree at most $O_d(1)$.  If more than half the elements of $A$ lie in this set, then we can apply Lemma \ref{linear} to those elements and establish conclusion (i) of the theorem; thus we may assume that fewer than half of the elements of $A$ are zero-divisors.  By throwing away all the zero-divisors we may thus assume that $A$ lies entirely in $R^*$. Applying Lemma \ref{ktl} and we may now assume that either $A-A$ contains $\gg K^{-O(1)} |A|$ zero-divisors, or that $|A' \cdot A' - A' \cdot A'| \ll K^{O(1)} |A|$ for some subset $A' \subset A$ of cardinality $\gg K^{-O(1)} |A|$.  In the former case, if we apply Lemma \ref{linear} to $(A-A) \backslash R^*$ we can locate a vector space $V$ of zero divisors such that the set $A' := (A-A) \cap V$ has cardinality $\gg_d K^{-O_d(1)} |A|$ elements of $A-A$.  By sum set estimates (Lemma \ref{plunruz}) we have
$$ |A' + A| \leq |A-A+A| \ll K^{O(1)} |A|$$
and so by the covering lemma (Lemma \ref{ruzsa-cover}) we conclude that
$$ A \subset A'-A' + \bigO_d(K^{O_d(1)}) \subset V + \bigO_d(K^{O_d(1)})$$
and so we are again in conclusion (i) of the theorem.  Thus we may assume that $|A' \cdot A' - A' \cdot A'| \ll K^{O(1)} |A'|$ for some $A' \subset A$ of cardinality $\gg K^{-O(1)} |A|$.  We then apply Theorem \ref{sum3-thm} and conclude that either $A'-A'$ contains $\gg K^{-O(1)} |A|$ zero divisors, or else we are in conclusion (ii) of the theorem.  In the former case we can argue as before to end up in conclusion (i) of the theorem, and the claim follows.
\end{proof}

It should be possible to analyse the conclusions (i) and (ii) of Theorem \ref{algebra} further for specific algebras.  For instance, when $F$ is finite and $R$ contains an identity, then every non-zero-divisor is invertible, and one can use Theorem \ref{sum2-thm} instead of Theorem \ref{sum3-thm} to simplify conclusion (ii).  If instead we have $R = M_2(F)$, then the only affine spaces of zero divisors are either one or zero-dimensional, or are two dimensional and consist of the matrices which either left-annihilate or right-annihilate a nonzero vector $v \in F^2$; this leads to a more explicit description of conclusion (i), although the final form is somewhat complicated to express and will not be done here.

\end{document}